\documentclass{amsart}
\usepackage[english]{babel}
\usepackage{mathrsfs, mathtools, amssymb, mathabx, stmaryrd, mathdots}
\usepackage{dsfont}
\usepackage{tikz}
\usepackage{tikz-cd}
\usepackage[paper=a4paper, margin=2.8cm, bottom=49mm]{geometry}

\usepackage{hyperref}
\hypersetup{
    pdftoolbar=true,        
    pdfmenubar=true,        
    pdffitwindow=false,     
    pdfstartview={FitH},    
    pdftitle={},    
    pdfauthor={},     
    pdfkeywords={}, 
    pdfnewwindow=true,      
    colorlinks=true,       
    linkcolor=blue,          
    citecolor=green,        
    filecolor=magenta,      
    urlcolor=cyan           
}
\theoremstyle{plain}
\newtheorem{theorem}{Theorem}[section]
\newtheorem{lemma}[theorem]{Lemma}
\newtheorem{proposition}[theorem]{Proposition}
\newtheorem{corollary}[theorem]{Corollary}

\theoremstyle{definition}
\newtheorem{definition}[theorem]{Definition}
\newtheorem{remark}[theorem]{Remark}
\newtheorem{example}[theorem]{Example}
\newtheorem{question}[theorem]{Question}
\newtheorem{conjecture}[theorem]{Conjecture}

\let\P\p
\DeclareMathOperator{\GL}{GL}
\newcommand{\Hom}{{\mathrm{Hom}}}
\DeclareMathOperator{\Spec}{Spec}

\def\CC{\mathbb{C}}
\let\ot\otimes
\DeclareMathOperator{\rk}{rk}\let\rank\rk
\DeclareMathOperator{\im}{im}
\DeclareMathOperator{\id}{id}
\DeclareMathOperator{\sgn}{sgn}

\DeclareMathOperator{\Seg}{Seg}
\let\epsilon\varepsilon
\def\floor#1{\left\lfloor#1\right\rfloor}
\DeclareMathOperator{\Hilb}{Hilb}
\DeclareMathOperator{\T}{T}
\let\surjto\twoheadrightarrow
\DeclareMathOperator{\bcr}{bcr}
\def\koszul{\Phi_{\mathrm K}}
\def\tangency{\Phi_{\mathrm T}}\let\tang\tangency

\subjclass[2020]{Primary: 14N07, Secondary: 14C05, 15A69.}
\keywords{Tensor border rank, flattening, secant variety, cactus variety, determinantal equation, Hilbert scheme of points}

\title[Nonlinear methods for tensors]{Nonlinear methods for tensors:\\ determinantal equations for secant varieties\\ beyond cactus}

\author{Matěj Doležálek}
\address{Fachbereich Mathematik und Statistik, Universität Konstanz, Konstanz, Germany}
\email{matej.dolezalek@uni-konstanz.de}

\author{Mateusz Micha\l ek}
\address{Fachbereich Mathematik und Statistik, Universität Konstanz, Konstanz, Germany}
\email{mateusz.michalek@uni-konstanz.de}

\begin{document}
\selectlanguage{english}

\begin{abstract}
We present a family of flattening methods of tensors which we call \emph{Kronecker-Koszul flattenings}, generalizing the famous Koszul flattenings and further equations of secant varieties studied among others by Landsberg, Manivel, Ottaviani and Strassen. We establish new border rank criteria given by vanishing of minors of Kronecker-Koszul flattenings.
We obtain the first explicit polynomial equations --  \emph{tangency flattenings} -- vanishing on secant varieties of Segre variety, but not vanishing on cactus varieties. Additionally, our polynomials have simple determinantal expressions.
As another application, we provide a new, computer-free proof that the border rank of the $2\times2$ matrix multiplication tensor is $7$.
\end{abstract}

\maketitle
\vspace{-1.2em}
\begin{flushright}
\small{\itshape
``Do not try and bend the spoon. That's impossible.\\ Instead, only try to realize the truth. There is no spoon. \\ Then you'll see that it is not the spoon that bends, it is only yourself.''}\par\smallskip
-- The Matrix
\end{flushright}

\section{Introduction}
\label{sec:intro}
The central problem concerning tensors is that of \emph{tensor decomposition} -- presenting a tensor as a sum of simple tensors, or computing the smallest possible size of a decomposition, called the \emph{tensor rank}. While the problem is NP-hard in general \cite{RankNPcomplete, hillar2013most}, providing estimates for rank or related notions such as border rank or asymptotic rank has been a fruitful field. Perhaps the most famous application is the study of computational complexity of matrix multiplication (e.g. \cite{strassen1969gaussian, coppersmith1982asymptotic, alman2024refined, williams2024new}). Further, a recent line of research relates low asymptotic rank of certain special tensors to faster than known algorithms for NP-hard problems \cite{bjorklund2025fast, bjorklund2024asymptotic, pratt2024stronger, MaMu-SotA}.
For an introduction to tensors from an algebraic perspective, we refer the reader to \cite[Chapter 9.2]{michalek2021invitation}, or to \cite{landsberg2011tensors} for a more extensive exposition. For the connections to complexity theory, we refer to \cite{burgisser2013algebraic, landsberg2017geometry}.

The main topic of this article is obtaining lower bounds on the border rank of tensors, a longstanding question in tensor geometry. The classical approach to the problem is by providing equations for secant varieties to the Segre variety. Flattenings, or maps which produce a matrix from a given tensor, have been a bountiful source of such lower bounds (e.g. \cite{langsberg-ottaviani-equationsforsecant, landsberg2008generalizations, landsberg-ottaviani-newbounds, landsberg-michalek-mamu, landsberg-michalek-haystack}), with the equations thus obtained coming in the form of minors. In particular, \emph{Strassen's equations}, further studied by Landsberg and Manivel and  expanded by Landsberg and Ottaviani to \emph{Koszul} and \emph{Young flattenings}, became the gold standard in the field. Apart from flattenings, the techniques of \emph{border apolarity} (see \cite{Buczynscy-apolarity}) and \emph{border substitution} (see \cite{landsberg2017geometry}) have been introduced and used to prove several new lower bounds in recent years. However, all of these methods have quite restrictive limits of application.

Indeed, current lower bound methods fall far short of being able to prove a tensor in $\CC^n\otimes\CC^n\otimes\CC^n$ to have the generic border rank, which is approximately $\frac{n^2}3$. Instead, known flattening methods give maximal lower bounds of up to $(2-\epsilon)n$ for small $\epsilon>0$. Combining algebraic equations with other methods, it is possible to provide examples of tensors of border rank $2.02n$ \cite{landsberg-michalek-haystack}. Still, there are well understood barriers which flattening methods cannot beat \cite{jarek_cactus, Galazka-bundles, EGOW18, garg2019more}. In contrast, the applications to asymptotic rank in computer science would require superlinear lower bounds on border rank.

The disparity between the generic border rank and available lower bounds has been attributed to the inclusion of secant varieties in \emph{cactus varieties} (see Section~\ref{sec:preliminaries}). In particular, known flattening methods have been unable to distinguish secant from cactus varieties, even when the inclusion is known to be strict  -- quoting from \cite{Conner_Harper_Landsberg_2023}:
\begin{quote}
``The geometric interpretation of the border rank lower bound barriers of \cite{EGOW18, Galazka-bundles} is that all equations obtained by taking minors, called rank methods, are actually equations for a larger variety than $\sigma_r(\Seg(\P A\times\P B\times\P C))$, called the $r$-th cactus variety \cite{Buczynscy-apolarity}.''
\end{quote}
Indeed, as proved in \cite{jarek_cactus, Galazka-bundles, galazka-thesis} all of the known methods producing equations for the $r$-th secant variety are equations for the $r$-th cactus variety. Further, it is known that the $r$-th cactus variety of the Segre variety in $(\CC^n)^{\ot 3}$ fills the ambient space for $r$ growing linearly in $n$ \cite{ballico2019note, bernardi2018polynomials, galkazka2023multigraded}.

For these reasons, the existence of easily computable determinantal equations vanishing on secant varieties but not vanishing on cactus varieties, has long been considered unlikely or even impossible in the tensor-geometric community.
In this article, we illustrate that it is not the determinantal expression which creates the cactus barrier, but rather the linear embedding of the tensor product into a matrix space.
Thus the starting point of our approach in this article is focusing on \emph{nonlinear} maps from tensor spaces to matrix spaces.
To obtain bounds on matrix rank with linear embeddings, it is enough to investigate the embedded image of the Segre variety and appeal to linearity. With nonlinear maps on the other hand, we need to work more generally, but this does yield stronger results. We obtain a new method, which we dub \emph{Kronecker-Koszul flattenings}, or more generally \emph{Kronecker-Young flattenings}, that allows one to produce
explicit and efficiently computable determinantal equations of secant varieties that do not generally vanish on cactus varieties. In particular, Kronecker-Koszul flattenings require less computation time and memory than methods based on a more direct identification of ideals defining the secant varieties (cf.~\cite{Hauenstein-Ikenmeyer-Landsberg}).

 Broadly speaking, our method starts by tensoring together several copies of the same tensor. This, depending on the interpretation of the codomain, may be viewed as a Kronecker power or Veronese embedding \cite{kaski2025universal}, and thus is the universal map for fixed degree polynomials. Next, we further tensor with several $2$-tensors corresponding to identity maps on various vector spaces.  Finally, we group and contract some of the factors to exterior powers. This results in what we call a \emph{Kronecker-Koszul tensor}. A Kronecker-Koszul flattening is then a classical flattening of the Kronecker-Koszul tensor. Note that due to initially taking a tensor power, Kronecker-Koszul tensors are in general nonlinear as functions of the starting tensor.
See Section~\ref{sec:construction} for details of the construction. There we also comment on what we call {Kronecker-Young flattenings}, a further generalization which synthesizes the approach of Young flattenings into the method.

Our main result uses the \emph{tangency flattening}, a very particular Kronecker-Koszul flattening which depends quadratically on a tensor. See the paragraph preceding Corollary~\ref{cor:tan_flat} for the definition.

\begin{theorem}
    \label{thrm:beat-cactus-introversion}
    For all $n\geq14$, minors of size $n(n-1)(n-2)+1$ of the tangency flattenings vanish on the $n$-th secant variety of the Segre variety in $\P(\CC^n\otimes\CC^n\otimes\CC^n)$ but do not vanish on its $n$-th cactus variety.
\end{theorem}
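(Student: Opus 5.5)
The proof has two halves: a vanishing statement on the secant variety and a non-vanishing statement at a cleverly chosen point of the cactus variety. The tangency flattening is built from a Kronecker-Koszul tensor that is quadratic in $T\in A\ot B\ot C$ (with $A=B=C=\CC^n$). Roughly, it is obtained from $T\ot T$, tensored with an identity map, and with some factors contracted to an exterior power $\Lambda^2$, giving a map $\tang(T)$ whose target size is about $n(n-1)(n-2)$ per block.

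For the vanishing part I would work on the dense set of tensors of rank at most $n$ and conclude by Zariski closure. Write $T=\sum_{i=1}^n a_i\ot b_i\ot c_i$. Then $T\ot T=\sum_{i,j}(a_i\ot b_i\ot c_i)\ot(a_j\ot b_j\ot c_j)$, so $\tang(T)=\sum_{i,j}\tang(\text{rank one}_i,\text{rank one}_j)$, where the summand is a bilinear-in-rank-one-terms matrix. The key points are:
\begin{itemize}
\item[(i)] The exterior-power contraction kills the diagonal terms $i=j$, or makes them very low rank. This is the ``tangency'' feature: the contraction $a_i\wedge a_i=0$ is what distinguishes points of the Segre from tangent (nonreduced) data.
\item[(ii)] The off-diagonal terms assemble so that the image of $\tang(T)$ lies in a subspace of dimension at most $n(n-1)(n-2)$, determined by the span of the $a_i\wedge a_j$ (and similar) tensored with an appropriate complementary space.
\end{itemize}
I would compute this rank bound directly for generic $a_i,b_i,c_i$ forming bases. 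Then I would argue by lower semicontinuity that the bound persists on the closure $\sigma_n$, so all minors of size $n(n-1)(n-2)+1$ vanish there.

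For the non-vanishing part, I need an explicit point of the $n$-th cactus variety outside $\sigma_n$ on which $\tang$ has rank exceeding $n(n-1)(n-2)$. The natural source is a finite scheme $R$ of length $n$ that is non-smoothable, or whose Segre-embedded span contains tensors not in $\sigma_n$. Concretely, I would take a local Gorenstein or non-smoothable scheme of length $n$. The bound $n\ge14$ strongly suggests using a known non-smoothable local scheme: length $14$ in $\mathbb A^6$, of type $(1,6,6,1)$, as in the Iarrobino–Emsalem examples. For larger $n$ I would take its disjoint union with $n-14$ reduced points. Embed $R$ into $\Seg(\P A\times\P B\times\P C)$ so that it spans something whose generic element $T_R$ is concise. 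Then compute $\tang(T_R)$. The nonreduced structure contributes terms like $a\wedge a'$, with $a'$ a tangent direction, that survive the contraction; a point of $\sigma_n$ would not produce these. This should make the rank at least $n(n-1)(n-2)+1$.

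The main obstacle is this last rank computation. It requires exhibiting a concrete tensor in the cactus variety and certifying a large-rank minor of a matrix of size cubic in $n$. My first attempt would be to choose coordinates so that $\tang(T_R)$ becomes block triangular. The reduced points would contribute exactly the generic off-diagonal structure, and the fat point would contribute one extra independent row beyond the secant bound. I would then reduce to checking the $n=14$ core piece, possibly by a structured computation. The extension to $n>14$ should follow by additivity under direct sums of $\tang$ on the block decomposition, with a careful count showing that the excess rank persists.
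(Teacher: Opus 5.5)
Your architecture matches the paper's. The secant-side bound is Theorem~\ref{thm:main} as specialised in Corollary~\ref{cor:tan_flat}, and the cactus side uses exactly the $(1,6,6,1)$ Gorenstein algebra plus reduced points. One correction on the secant side: the contraction lands in $\bigwedge^3 A$, two factors coming from $T\otimes T$ and one from the identity on $A$. Only ordered pairs $i\neq j$ survive, and each contributes rank at most $n-2$, because the identity $\sum_b b\otimes b^*$, written in a basis extending $a_i,a_j$, loses its two terms with $b\in\{a_i,a_j\}$. So $n(n-1)(n-2)$ is a rank bound, not a target size.

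The gap is that both substantive steps of the cactus side are asserted rather than proved. For the base case $n=14$, your justification is false. You argue that the nonreduced structure produces surviving terms $a\wedge a'$ that points of $\sigma_n$ cannot produce. But spans of smoothable nonreduced length-$n$ schemes lie in $\sigma_n$; for instance, the structure tensor of $\CC[x]/(x^n)$ has border rank $n$. Not even nonsmoothability is expected to suffice: by Conjecture~\ref{conj:tangency}, $\tang$ should detect only algebras whose tangent space to the Hilbert scheme is smaller than $dn$. This fails e.g.\ for the general cubic in $8$ variables, which lies on a component of dimension $155>144=dn$. The paper offers no conceptual argument here either. It takes the structure tensor $T$ of the apolar algebra of a general cubic in six variables, which lies in $\kappa_{14}$ by Lemma~\ref{lem:GorCactus}, and verifies by computer that $\rank\tang^A(T)=2192=14\cdot13\cdot12+8$. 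Your ``generic concise element of the span of an embedded $\Spec R$'' is legitimately in $\kappa_n$ by definition. Conciseness in fact forces it to be this same structure tensor up to the $\GL(A)\times\GL(B)\times\GL(C)$-action, which preserves $\rank\tang$. Still, some explicit rank certificate is unavoidable.

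For $n>14$, the step is not additivity. $\tang$ is quadratic in $T$, and for a direct sum its matrix splits into eight blocks (Proposition~\ref{prop:tangency-direct-sum}). Six of these are cross terms: $\Phi_{\mathrm Q}^A$, $\koszul^{A,B}$ or $\koszul^{A,C}$ of one summand, tensored with an identity or a classical flattening of the other. With the second summand equal to $I_1$ this gives
$\rank\tang^A(T\oplus I_1)=\rank\tang^A(T)+\rank\Phi_{\mathrm Q}^A(T)+\rank\koszul^{A,B}(T)+\rank\koszul^{A,C}(T)$,
whereas the threshold grows by $(n+1)n(n-1)-n(n-1)(n-2)=3n(n-1)$. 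So the excess persists only if each of these three flattenings of $T$ has rank at least $n(n-1)$. This is the missing lemma. It holds for structure tensors of unital algebras for two reasons:
\begin{itemize}
\item $T(1)=\id_R$ supplies $n-1$ full-rank pivot blocks in the Koszul matrix (Lemma~\ref{lem:easy rank Koszul for algebras});
\item $\im\koszul^{A,C}(T)\subseteq\im\Phi_{\mathrm Q}^A(T)$, by setting $b=1$ (Proposition~\ref{prop:koszul-equals-quad-on-algs}).
\end{itemize}
Together these yield Corollary~\ref{cor:alg-tangency-sum} and hence the induction.
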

For example, for $n=14$ the minors in question are polynomials of degree $4370$ in $2744$ variables. Yet, they are explicit and we can determine their vanishing or nonvanishing very fast.
We will prove Theorem~\ref{thrm:beat-cactus-introversion} by lower-bounding border rank via rank of the tangency flattening in Corollary~\ref{cor:tan_flat} and exhibiting in Theorem~\ref{thrm:beat-cactus} a sequence of tensors of cactus rank $n$ on which the above-named minors of the tangency flattening do not vanish. The name ``tangency'' flattening is motivated by a conjectural connection to tangent spaces of Hilbert schemes of points -- see Conjecture~\ref{conj:tangency}. In Subsection~\ref{subsec:mamu}, we additionally show that tangency flattenings provide a new and completely elementary proof of the fact that $2\times2$ matrix multiplication tensor has border rank $7$.

While Kronecker-Koszul flattenings generalize Koszul flattenings (cf. \cite{langsberg-ottaviani-equationsforsecant, landsberg2008generalizations, landsberg-michalek-mamu, landsberg-michalek-haystack}), we also analyze in Section~\ref{sec:koszul} bounds that may be extracted via Koszul flattenings for the border rank (and border cactus rank) of structure tensors of certain algebras. It is known \cite{blaser2016degeneration} that a (finite) algebra is smoothable if and only if the associated structure tensor has minimal border rank. This is also one of the main challenges in application of the border apolarity method.
Further, two of the classical flattenings of structure tensors of algebras give rise to spaces of commuting matrices. Thus, many of the flattening methods cannot provide nontrivial bounds on border rank of such tensors. However, we show that it is possible to obtain lower bounds on border cactus rank of structure tensors of algebras strictly greater than the degree of the algebra, even using Koszul flattenings (see Proposition~\ref{prop:1de-koszul}).

Finally, we show that under mild assumptions, Koszul flattenings may be replaced and improved upon by certain quadratic Kronecker-Koszul flattenings for the purposes of lower-bounding border rank of tensor (see Proposition~\ref{prop:quadratic-better-than-koszul}).

\section*{Acknowledgments}
We are grateful to Jaros{\l}aw Buczy{\'n}ski, Fulvio Gesmundo, Joachim Jelisiejew, Petteri Kaski, J.~M.~Landsberg and Giorgio Ottaviani for interesting discussions, comments and encouragement.

This work has been supported by European Union’s HORIZON–MSCA-2023-DN-JD programme under the Horizon Europe
(HORIZON) Marie Skłodowska-Curie Actions, grant agreement 101120296 (TENORS).

\section{Preliminaries \& notation}
\label{sec:preliminaries}

For an integer $n$ we let $[n]=\{1,\dots,n\}$.
All vector spaces will be over complex numbers $\CC$. Given a basis $v_1,\dots,v_n$ of a vector space $V$, we denote $v_1^*,\dots,v_n^*$ the dual basis of $V^*$. The tensor product will also be taken over $\CC$.
Any binomial $\binom{n}{m}$ is equal to zero for $m>n$. Similarly $\bigwedge^m V=0$ for $m>\dim V$. All algebras we consider in this article will be finite, commutative, unital algebras over $\CC$. When referring to their dimension, we will mean dimension as a vector space, not the Krull dimension.

For a projective variety $X\subset \P(V)$ its \emph{$k$-th secant variety} $\sigma_k(X)\subset \P(V)$ is the Zariski closure of the union of projective spans of $k$-tuples of points of $X$. The \emph{$X$-rank} of $[v]\in \P(V)$ is the minimal number $k$ of points $[x_1],\dots,[x_k]\in X$, such that $v=\sum_{i=1}^k x_i$. The \emph{$k$-th cactus variety} $\kappa_k(X)\subset \P(V)$ is the Zariski closure of the union of projective spans of degree $k$ zero-dimensional subschemes of $X$. We always have $\sigma_k(X)\subset \kappa_k(X)$.  It is known that, after Veronese reembeding, nonsmoothable Gorenstein schemes contribute to cactus variety being larger than secant variety. So far the only explicit study of cactus variety is based on a good understanding of nonsmoothable Gorenstein schemes of degree $14$ in $\CC^6$ \cite{galazka_mandziuk_rupniewski_distinguishing}. 
For more details we refer to \cite{buczynska2014secant, iarrobino1999power}.

In our case we will usually take $X$ to be the Segre variety $X=\prod_{i=1}^r \P(V_i)\subset \P\left(\bigotimes_{i=1}^r V_i\right)$. In this case if $[T]\in \sigma_k(X)$ (resp.~$[T]\in \kappa_k(X)$), we say that $T$ has \emph{border rank} (resp.~\emph{border cactus rank}) at most $k$. The $X$-rank is simply called the \emph{rank} of the tensor. In case $r=2$ by identifying $V_1\ot V_2=\Hom(V_1^*, V_2)$ it coincides with the rank of a linear map.

Given a tensor $T\in \bigotimes_{i\in[r]} V_i$ and a subset $I\subset [r]$, we may identify $T$ with a linear map
\[T^I:\bigotimes_{i\in I} V_i^*\rightarrow \bigotimes_{i\in [r]\setminus I} V_i,\]
that is the \emph{classical flattening} of $T$. Ranks of classical flattenings bound the border rank of $T$ from below. For a tensor $T\in A\ot B \ot C$, where $\dim B=\dim C$, we say that $T$ is \emph{$1_A$-generic} if there exists $l\in A^*$, such that $T(l)\in B\ot C$ is a full rank matrix.

Recall that given any collection $F\subset \CC[x_1,\dots,x_n]$ of polynomials, we obtain the \emph{apolar ideal} $F^\perp\subset \CC[\partial_1,\dots,\partial_n]=D$ of operators annihilating each element of $F$. The \emph{apolar algebra} to $F$ is simply $D/F^\perp$. We will always be assuming that $F$ is finite, or equivalently a finitely generated $D$-module. The apolarity construction goes back to Macaulay \cite{Mac94}, for more details we refer to \cite[p.~XX]{iarrobino1999power}, \cite[p.~75]{dolgachev2012classical}. and \cite[Section 2.1]{flavi2025symmetric}.

The \emph{Hilbert scheme of $n$ points of $\CC^d$}, denoted $\Hilb_n\CC^d$, parameterizes surjective ring homomorphisms $S:=\CC[x_1,\dots,x_d]\surjto R$ to $n$-dimensional algebras $R$, or informally, $d$-generated $n$-dimensional algebras. Denoting the kernel of $S\surjto R$ as $I$, the tangent space at $R$ as a point on $\Hilb_n\CC^d$ may be identified as $\T_{[R]}\Hilb_n\CC^d = \Hom_S(I, R)$. A distinguished \emph{smoothable component} of $\Hilb_n\CC^d$ consists of smoothable algebras, and by considering the dense subset of smooth algebras, this component is $dn$-dimensional. For further information on Hilbert schemes of points, we refer the reader to the survey \cite{joachim-survey}.

\section{Kronecker-Koszul flattenings}
\label{sec:construction}

We consider tensors $T\in\bigotimes_{i=1}^r V_i$, where $d_i:=\dim V_i$. For our main construction we fix:
\begin{itemize}
\item a positive integer $k$;
\item $r$ partitions $\{\lambda_{i,1},\dots,\lambda_{i,{s_i}}\}_{1\leq i\leq r}$ of the set $[k]$, i.e.~for every $1\leq i\leq r$ we have disjoint, nonempty subsets $\lambda_{i,j}\subset [k]$, whose union is $[k]$;
\item for each $\lambda_{i,j}$ a nonnegative integer $d_{i,j}'$.
\end{itemize}
The objects we introduce below depend implicitly or explicitly on this fixed data. Before we proceed, let us  provide intuitive meaning of the objects above. The number $k$ will denote the tensor power of the tensor $T$. The $d_{i,j}'$ will provide exterior powers with which we will take the tensor product, similar to the construction of Koszul flattenings. The partitions $\lambda_{i,j}$ will tell us how to contract the tensor power of $T$ with the exterior powers, obtaining higher exterior powers.

We start by providing formalism on the exterior powers.
We fix a tensor
\[
J\in \bigotimes_{i=1}^r\bigotimes_{j=1}^{s_i}  \bigwedge^{d_{i,j}'}V_i\otimes \bigwedge^{d_{i,j}'} V_i^* ,
\]
which corresponds to the identity map on $\bigotimes_{i=1}^r\bigotimes_{j=1}^{s_i} \bigwedge^{d_{i,j}'}V_i$.
Here when $d_{i,j}'=0$, we may simply omit the term in the tensor product. This formally means tensoring with the one-dimensional space $\CC=\CC\otimes \CC=\bigwedge^0 V_i\otimes \bigwedge^0 V_i^*$. We will also consider each exterior power as one vector space, and refer to the corresponding tensor rank and border rank.\footnote{Most of our methods could possibly be upgraded by looking at $X$-ranks stemming from (products of) Grassmannians. However, we will not pursue this in the present article.}

Next, we consider the tensor product
$\tilde T:=T^{\otimes k}\otimes J$. The function $T\mapsto \tilde T$ is a degree $k$ polynomial map. By submultiplicativity of rank (resp.~border rank) under tensor product, it maps tensors of rank (resp.~border rank) at most $q$ to tensors of rank (resp.~border rank) at most
\[
(\rk T)^k\cdot\rk J = q^k\cdot \prod_{i=1}^r \prod_{j=1}^{s_i} \binom{d_{i}}{d_{i,j}'}.
\]
However, given any decomposition of $T$ as a sum of $q$ simple tensors, the induced decomposition for $T^{\otimes k}$ needs to be of a special form.
This is the property we are going to exploit.

We note that $T^{\otimes k}\in \bigotimes_{i=1}^r V_i^{\otimes k}$. However, the partitions $\lambda_{i,j}$ distinguish an ordering, thus we will write $V_i^{\otimes k}=V_{i,1}\otimes V_{i,2}\otimes\dots\otimes V_{i,k}$ where we have $V_{i,t}= V_i$. In this setting, we have
\[\tilde T\in \bigotimes_{i=1}^r \left(\left(\bigotimes_{t=1}^k V_{i,t} \right)\otimes \left(\bigotimes_{j=1}^{s_i}\bigwedge^{d_{i,j}'}V_i\otimes\bigwedge^{d_{i,j}'}V_{i}^*\right)\right)=\bigotimes_{i=1}^r \left(\left(\bigotimes_{j=1}^{s_i} \bigotimes_{t\in \lambda_{i,j}}V_{i,t} \right)\otimes \left(\bigotimes_{j=1}^{s_i}\bigwedge^{d_{i,j}'}V_i\otimes\bigwedge^{d_{i,j}'}V_{i}^*\right)\right).\]
The next step is to contract with the canonical maps
\[ \left(\bigotimes_{t\in \lambda_{i,j}}V_{i,t}\right)\otimes\bigwedge^{d_{i,j}'}V_i\rightarrow\bigwedge^{d_{i,j}'+|\lambda_{i,j}|}V_i.\]
We obtain a map
\[\pi:\bigotimes_{i=1}^r \left(\left(\bigotimes_{j=1}^{s_i} \bigotimes_{t\in \lambda_{i,j}}V_{i,t} \right)\otimes \left(\bigotimes_{j=1}^{s_i}\bigwedge^{d_{i,j}'}V_i\otimes\bigwedge^{d_{i,j}'}V_{i}^*\right)\right)\rightarrow \bigotimes_{i=1}^r\bigotimes_{j=1}^{s_i}\bigwedge^{d_{i,j}'+|\lambda_{i,j}|}V_i\otimes \bigwedge^{d_{i,j}'}V_i^*.\]

Our main object of interest is the tensor $T_\pi:=\pi(\tilde T)$, which we call the \emph{Kronecker-Koszul tensor}. We consider $T_\pi$ as a tensor in the tensor product of $\sum_{i=1}^r 2s_i$ many vector spaces, some of which may be one-dimensional, or even zero-dimensional in which case $T_\pi=0$. In other words, we consider each $\bigwedge^{d_{i,j}'+|\lambda_{i,j}|}V_i$ and each $\bigwedge^{d_{i,j}'}V_i^*$ as a single vector space and we compute rank and border rank of $T_\pi$ with respect to such a tensor product.

In order to state our main theorem bounding the (border) rank of $T_\pi$, we need to introduce one more object. A $k$-tuple $(l_1,\dots,l_k)\in [q]^k$ is called \emph{compatible} if the following condition holds:
\begin{itemize}
\item for any $1\leq t_1\neq t_2\leq k$, if  there exists $\lambda_{i,j}$ containing both $t_1$ and $t_2$, then $l_{t_1}\neq l_{t_2}$.
\end{itemize}
Let $C([q]^k)$ be the set of compatible $k$-tuples and $F(q):=|C([q]^k)|$ be the number of compatible $k$-tuples.
\begin{remark}
    \label{rmrk:chromatic}
    Consider a graph $G$ with vertex set $[k]$. For each $\lambda_{i,j}$ we connect the vertices in $\lambda_{i,j}$ pairwise with edges. Thus $G$ is a (usually not disjoint) union of cliques. Then $F(q)$ is the number of proper vertex colorings of $G$ with $q$ colors, i.e.~the evaluation of the chromatic polynomial $\chi_G(q)$.
\end{remark}
\begin{example}
Consider the Kronecker-Koszul construction with the following data: $r=3$ vector spaces, tensor power $k=4$, partitions with $\lambda_{1,1}=\{1,2,3\}$, $\lambda_{2,1}=\{3,4\}$ and all other $\lambda_{i,j}$ singletons. The graph $G$ of Remark~\ref{rmrk:chromatic} is then as in Figure~\ref{fig:graph}; in particular, recall that the parameters $d'_{i,j}$ as well as the vector space dimensions $d_i$ are irrelevant for the construction of $G$. A standard computation of $\chi_G$ then yields $F(q) = \chi_G(q) = q(q-1)^2(q-2)$.
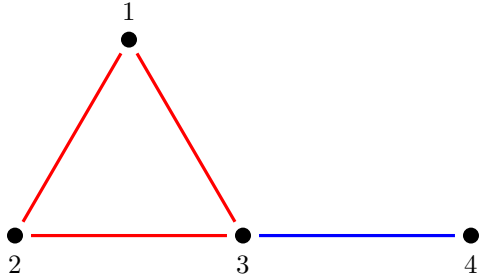
\begin{figure}[ht]
\centering
\begin{tikzpicture}[
    scale=3,
    every path/.style={shorten >=6pt, shorten <=6pt, line width=1.2pt}
    ]
  \coordinate (1) at (120:1cm);
  \coordinate (2) at (-1,0);
  \coordinate (3) at (0,0);
  \coordinate (4) at (1,0);
  \foreach\x in {2,...,4} \fill (\x) circle (1pt) node[below=3.5pt]{\x};
  \fill (1) circle (1pt) node[above=3.5pt]{1};
  \draw[red] (1) -- (2);
  \draw[red] (2) -- (3);
  \draw[red] (3) -- (1);
  \draw[blue] (3) -- (4);
\end{tikzpicture}
\caption{
    Graph $G$ obtained as the union of the clique $\color{red}\lambda_{1,1}=\{1,2,3\}$ and the clique $\color{blue}\lambda_{2,1}=\{3,4\}$. All other sets in the partitions are singletons, contributing no edges.
}
\label{fig:graph}
\end{figure}
\end{example}

\begin{theorem}\label{thm:main}
    If $T$ has rank (resp.~border rank) at most $q$, then $T_{\pi}$ has rank (resp.~border rank) at most $F(q)\cdot \prod_{i=1}^r\prod_{j=1}^{s_i} \binom{d_i-|\lambda_{i,j}|}{d_{i,j}'}$.
\end{theorem}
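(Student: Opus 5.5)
The argument has two parts: first the rank bound, by explicitly expanding a decomposition of $T$; then the border rank bound, by a limiting argument.

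\emph{Rank.} Write $T=\sum_{l=1}^q v^{(l)}_1\otimes\dots\otimes v^{(l)}_r$ with $v^{(l)}_i\in V_i$. Then
\[
T^{\otimes k}=\sum_{(l_1,\dots,l_k)\in[q]^k}\bigotimes_{i=1}^r\bigotimes_{t=1}^k v^{(l_t)}_i ,
\]
so $\tilde T$ is the corresponding sum of terms $\bigl(\bigotimes_{i,t} v^{(l_t)}_i\bigr)\otimes J$. Since $\pi$ is linear, $T_\pi$ is the sum of the images of these terms. The plan is to show two things.
\begin{itemize}
\item[(a)] Terms with non-compatible $(l_1,\dots,l_k)$ map to zero. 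If $t_1\neq t_2$ lie in a common $\lambda_{i,j}$ and $l_{t_1}=l_{t_2}$, the contraction into $\bigwedge^{d'_{i,j}+|\lambda_{i,j}|}V_i$ wedges the vector $v^{(l_{t_1})}_i$ with itself, which gives zero.
\item[(b)] For a compatible tuple, the image has rank at most $\prod_{i,j}\binom{d_i-|\lambda_{i,j}|}{d'_{i,j}}$.
\end{itemize}
Summing over the $F(q)$ compatible tuples then gives the claim.

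For (b), the image factorizes over the pairs $(i,j)$. The factor for $(i,j)$ is the element of $\bigwedge^{d'_{i,j}+|\lambda_{i,j}|}V_i\otimes\bigwedge^{d'_{i,j}}V_i^*$ given by the map $\omega\mapsto w\wedge\omega$, where $w=\bigwedge_{t\in\lambda_{i,j}}v^{(l_t)}_i$, up to sign and ordering conventions. A tensor product of factors each of rank at most $R_{i,j}$ has rank at most $\prod R_{i,j}$. Note that the factors live in distinct tensor factors of $T_\pi$, so this is an outer product in the correct multi-factor sense. It therefore suffices to show that the map $\omega\mapsto w\wedge\omega$ has matrix rank at most $\binom{d_i-m}{d'}$, where $m=|\lambda_{i,j}|$ and $d'=d'_{i,j}$.

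If $w=0$ this is trivial. Otherwise the vectors $u_1,\dots,u_m$ in $w$ are linearly independent; extend them to a basis $u_1,\dots,u_{d_i}$ of $V_i$. Then $w\wedge u_S$ vanishes unless $S\cap[m]=\emptyset$. Hence the map factors through the projection of $\bigwedge^{d'}V_i$ onto $\bigwedge^{d'}\mathrm{span}(u_{m+1},\dots,u_{d_i})$, and can be written as
\[
\sum_{S\subset\{m+1,\dots,d_i\},\,|S|=d'}(w\wedge u_S)\otimes u_S^* .
\]
This is a sum of $\binom{d_i-m}{d'}$ simple tensors, as required.

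\emph{Border rank.} This is the step I expect to need the most care, because the coordinatewise argument in (a) uses exact vanishing of the non-compatible terms. If $T=\lim_{\epsilon\to0}T_\epsilon$ with $\rk T_\epsilon\le q$, then the rank bound applies to each $T_\epsilon$, giving $\rk (T_\epsilon)_\pi\le N$ with $N=F(q)\cdot\prod_{i,j}\binom{d_i-|\lambda_{i,j}|}{d'_{i,j}}$. Moreover $(T_\epsilon)_\pi\to T_\pi$, since $T\mapsto T_\pi$ is polynomial and hence continuous. Tensors of rank at most $N$ lie in the closed set $\sigma_N$ (affine cone), so the limit $T_\pi$ has border rank at most $N$.

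The only subtlety is making sure the rank bound holds for every decomposition, including ones with $w=0$ or with dependent vectors, and not just generic ones. Both cases were handled above: $w=0$ directly, and dependence among the $u$'s forces $w=0$.
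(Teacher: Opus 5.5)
Your proposal is correct and follows essentially the same route as the paper: you expand $T^{\otimes k}\otimes J$ over $[q]^k$, kill the non-compatible tuples via $v\wedge v=0$, and for a compatible tuple use a basis adapted to the vectors indexed by $\lambda_{i,j}$, so that only $\binom{d_i-|\lambda_{i,j}|}{d'_{i,j}}$ terms of each identity factor survive; the border rank case then follows by continuity. The differences are cosmetic: you bound the rank of each two-factor piece $\omega\mapsto w\wedge\omega$ separately before multiplying, and you do the continuity step at the end, whereas the paper does it at the start.
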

\begin{proof}
    As all the maps we apply are continuous, it is enough to prove the statement about rank. Thus we bound the rank of $T_{\pi}$, under the assumption that $T$ has rank at most $q$, i.e.~$T=\sum_{l=1}^q v_{l,1}\otimes\dots\otimes v_{l,r}$.
        We note that
    \[T^{\otimes k}\otimes J=\sum_{(l_1,\dots,l_k)\in[q]^k}\left(\bigotimes_{t=1}^kv_{l_t,1}\right)\otimes\dots\otimes\left(\bigotimes_{t=1}^kv_{l_t,r}\right)\otimes J.\]
    Hence,
    \[T_\pi=\pi(T^{\otimes k}\otimes J)=\sum_{(l_1,\dots,l_k)\in[q]^k}\pi\left(\left(\bigotimes_{t=1}^kv_{l_t,1}\right)\otimes\dots\otimes\left(\bigotimes_{t=1}^kv_{l_t,r}\right)\otimes J\right).\]
    We note that if $(l_1,\dots,l_k)$ is not compatible, then
\[\pi\left(\left(\bigotimes_{t=1}^kv_{l_t,1}\right)\otimes\dots\otimes\left(\bigotimes_{t=1}^kv_{l_t,r}\right)\otimes J\right)=0\]
as one will take the exterior product of two vectors that are the same. Thus we have
\[T_\pi=\sum_{(l_1,\dots,l_k)\in C([q]^k)}\pi\left(\left(\bigotimes_{t=1}^kv_{l_t,1}\right)\otimes\dots\otimes\left(\bigotimes_{t=1}^kv_{l_t,r}\right)\otimes J\right).\]
This sum has $F(q)$ many terms, and we will upper bound the rank of each term.
Fix $(l_1,\dots,l_k)$. If for any $\lambda_{i,j}$ the vectors $v_{l_t,i}$ for $t\in \lambda_{i,j}$ are not linearly independent, then
\[\pi\left(\left(\bigotimes_{t=1}^kv_{l_t,1}\right)\otimes\dots\otimes\left(\bigotimes_{t=1}^kv_{l_t,r}\right)\otimes J\right)=0.\]
Thus, we assume this is not the case and for every $\lambda_{i,j}$ we extend the sequence $S_{i,j}:=(v_{l_t,i})_{t\in\lambda_{i,j}}$ to a basis $B_{i,j}$ of $V_i$. Observe that $|S_{i,j}|=|\lambda_{i,j}|$.
We may represent $J$ as
\[J=\bigotimes_{i=1}^r\bigotimes_{j=1}^{s_i} \sum_{B\subset B_{i,j}}\bigwedge_{b\in B} b\otimes \bigwedge_{b\in B}b^*,\]
  where $ \sum_{B\subset B_{i,j}}$ is over all subsets of cardinality $d_{i,j}'$. We note that if $B\cap S_{i,j}\neq\emptyset$, then
  \[\pi\left(\left(\bigotimes_{t=1}^kv_{l_t,1}\right)\otimes\dots\otimes\left(\bigotimes_{t=1}^kv_{l_t,r}\right)\otimes \bigotimes_{i=1}^r\bigotimes_{j=1}^{s_i} \bigwedge_{b\in B} b\otimes \bigwedge_{b\in B}b^*\right)=0.\]
  Thus, we may change $\sum_{B\subset B_{i,j}}$ to summation over subsets of cardinality $d_{i,j}'$ that are disjoint from $S_{i,j}$. For fixed $(l_1,\dots,l_k)$ we obtain:
  \[\pi\left(\left(\bigotimes_{t=1}^kv_{l_t,1}\right)\otimes\dots\otimes\left(\bigotimes_{t=1}^kv_{l_t,r}\right)\otimes \bigotimes_{i=1}^r\bigotimes_{j=1}^{s_i} \sum_{B\subset B_{i,j}\setminus S_{i,j}}\bigwedge_{b\in B} b\otimes \bigwedge_{b\in B}b^* \right).\]
  This is a sum of $\prod_{i=1}^r\prod_{j=1}^{s_i}\binom{d_i-|\lambda_{i,j}|}{d_{i,j}'}$ rank one tensors. As we have $F(q)$ many possibilities for $(l_1,\dots,l_k)$ the theorem follows.
\end{proof}

\begin{definition}
    A \emph{Kronecker-Koszul flattening} of $T$ is any classical flattening of the tensor $T_\pi$.
\end{definition}

\begin{corollary}\label{cor:flat bound}
    If $T$ has border rank at most $q$, then the rank of a Kronecker-Koszul flattening of $T$ is at most $F(q)\cdot \prod_{i=1}^r\prod_{j=1}^{s_i} \binom{d_i-|\lambda_{i,j}|}{d_{i,j}'}$. In particular, minors of the Kronecker-Koszul flattening provide equations for secant varieties of the Segre variety.
\end{corollary}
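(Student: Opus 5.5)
The corollary follows by combining Theorem~\ref{thm:main} with two standard facts: a flattening does not raise border rank, and the matrix-rank condition is closed. Throughout, set $N:=F(q)\cdot \prod_{i=1}^r\prod_{j=1}^{s_i} \binom{d_i-|\lambda_{i,j}|}{d_{i,j}'}$.

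First, fix a tensor $T$ of border rank at most $q$. Theorem~\ref{thm:main} gives that $T_\pi$ has border rank at most $N$, where $T_\pi$ is viewed as a tensor in the product of the $\sum_i 2s_i$ spaces $\bigwedge^{d_{i,j}'+|\lambda_{i,j}|}V_i$ and $\bigwedge^{d_{i,j}'}V_i^*$.

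Next, consider any classical flattening $T_\pi^I$, obtained by grouping these factors into two blocks. A rank one tensor in the multi-factor product becomes a rank one matrix under this grouping, since it is still an outer product of a vector from each block. Hence a decomposition of $T_\pi$ into $m$ simple tensors yields a matrix of rank at most $m$, so $\rk T_\pi^I\le \rk T_\pi$. The assignment $T_\pi\mapsto T_\pi^I$ is linear, and the set of matrices of rank at most $N$ is Zariski closed. Therefore the bound passes to limits: if $T_\pi$ is a limit of tensors of rank at most $N$, its flattening is a limit of matrices of rank at most $N$, and so has rank at most $N$.

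Finally, the composite $T\mapsto T_\pi^I$ is a homogeneous polynomial map of degree $k$ in the entries of $T$. Each $(N+1)\times(N+1)$ minor of $T_\pi^I$ is therefore a homogeneous polynomial in the coordinates of $T$, and it vanishes on every tensor of border rank at most $q$. These minors thus lie in the ideal of $\sigma_q$ of the Segre variety, as the corollary claims.

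No step is a real obstacle; the only point needing care is that the border rank statement in Theorem~\ref{thm:main} is already available, so continuity is used only for the flattening step.
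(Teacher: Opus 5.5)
Your proof is correct and follows the same route as the paper, which likewise deduces the corollary from Theorem~\ref{thm:main} together with the fact that ranks of classical flattenings lower-bound border rank. You simply spell out that fact (simple tensors flatten to rank-one matrices, and the rank-$\le N$ locus is closed) and check that the resulting minors are homogeneous polynomials in $T$.
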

\begin{proof}
    Follows from Theorem \ref{thm:main}, as ranks of classical flattenings lower bound border rank of the tensor.
\end{proof}
The bounds in Theorem \ref{thm:main} and Corollary \ref{cor:flat bound} do not have to be tight. This can happen for various reasons, some more opaque than others:

\begin{example}
    \label{ex:wedging-too-much}
    Consider $r=3$, $k=2$, $s_1=s_2=s_3=1$, $\lambda_{1,1}=\lambda_{2,1}=\lambda_{3,1}=\{1,2\}$ and all $d'_{i,j}$ zero. Denoting $T=\sum_{l=1}^q a_l\otimes b_l\otimes c_l \in A\otimes B\otimes C$, the corresponding Kronecker-Koszul tensor is then
    \[
        T_\pi = \sum_{1\leq i,j\leq q} (a_i\wedge a_j)\otimes(b_i\wedge b_j)\otimes(c_i\wedge c_j) \in \bigwedge^2 A\otimes \bigwedge^2 B\otimes \bigwedge^2 C.
    \]
    However, exchanging the roles of $i$ and $j$ in the summation and applying antisymmetry yields $T_\pi = -T_\pi$, hence $T_\pi=0$. In particular, the bound of Theorem~\ref{thm:main} is not tight.

    Experts may notice that this is due to the fact that the decomposition of $S^2(A\otimes B\otimes C)$ into irreducible $(\GL(A)\times\GL(B)\times\GL(C))$-representations does not contain $\bigwedge^2 A\otimes \bigwedge^2 B\otimes\bigwedge^2 C$.
\end{example}

\begin{example}
    \label{ex:mixed-koszul}
    Consider $r=3$, $d_1=d_2=d_3=q$, $k=1$ with $d'_{1,1}=d'_{2,1}=1$, $d'_{3,1}=0$. When $T = \sum_{i=1}^q a_i\otimes b_i\otimes c_i\in A\otimes B\otimes C$ is a concise tensor of minimal rank, Corollary~\ref{cor:flat bound} predicts the flattening
    \[
        \bigwedge^2 A^*\otimes \bigwedge^2 B^* \to C\otimes A^*\otimes B^*
    \]
    to have rank at most $q(q-1)^2$, based on the decomposition
    \[
        T_\pi = \sum_{\substack{1\leq i,j_1,j_2\leq n\\ j_1\neq i\neq j_2}} (a_i\wedge a_{j_1})\otimes (b_i\wedge b_{j_2})\otimes c_i\otimes a_{j_1}^* \otimes b_{j_2}^*.
    \]
    However, with the chosen flattening, for each term with $(i,j_1,j_2)=(x,y,y)$, we may combine it with $(i,j_1,j_2)=(y,x,x)$ into
    \[
        (a_x\wedge a_y)\otimes (b_x\wedge b_y) \otimes \left(c_x\otimes a_y^*\otimes b_y^* + c_y\otimes a_x^*\otimes b_x^*\right),
    \]
    which results in the Kronecker-Koszul flattening having rank at most $q(q-1)^2-\binom q2$. In particular, the bound of Corollary~\ref{cor:flat bound} is not tight.
\end{example}

However, in the case of many Kronecker-Koszul flattenings of interest the bound is tight. Below we present many such cases, which in particular cover all Kronecker-Koszul flattenings that we will use for our applications in Sections~\ref{sec:koszul} and \ref{sec:applications}.
\begin{proposition}
    Assume $d_1=\dots=d_r=q$ and for every $1\leq t\leq k$ there exist at least two $\lambda_{i_1(t),j_1(t)}$, $\lambda_{i_2(t),j_2(t)}= \{t\}$ such that $d'_{i_1(t),j_1(t)}=d'_{i_2(t),j_2(t)}=0$. If the codomain of $\pi$ is nonzero, i.e.~$d_{i,j}'+|\lambda_{i,j}|\leq d_i$, then there exists a tensor $T$ of rank $q$ and a Kronecker-Koszul flattening achieving the bound of Corollary \ref{cor:flat bound}. In particular, the bound in Theorem \ref{thm:main} is tight.
\end{proposition}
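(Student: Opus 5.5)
Take $T=\sum_{l=1}^q e_{l,1}\otimes\dots\otimes e_{l,r}$, where $e_{1,i},\dots,e_{q,i}$ is a basis of $V_i$; this is the unit tensor of rank $q$. The proof of Theorem~\ref{thm:main} then gives a decomposition with no cancellation in the basis choice, since we may take $B_{i,j}$ to be the standard basis throughout:
\[
T_\pi=\sum_{(l_1,\dots,l_k)\in C([q]^k)}\ \sum_{(B_{i,j})}\ \bigotimes_{i,j}\Bigl(e_{l_{\lambda_{i,j}},i}\wedge \bigwedge_{b\in B_{i,j}}b\Bigr)\otimes\bigwedge_{b\in B_{i,j}}b^*,
\]
where $B_{i,j}$ ranges over $d'_{i,j}$-subsets of the standard basis of $V_i$ disjoint from $\{e_{l_t,i}:t\in\lambda_{i,j}\}$. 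The total number of terms is exactly the bound $N:=F(q)\prod\binom{q-|\lambda_{i,j}|}{d'_{i,j}}$, and the hypothesis $d'_{i,j}+|\lambda_{i,j}|\le d_i$ ensures each term is nonzero. Every factor of every term is, up to sign, a standard basis vector of the relevant exterior power (or its dual). It therefore suffices to choose a bipartition of the $\sum_i 2s_i$ factors into domain and codomain such that the $N$ terms have pairwise distinct domain basis vectors and pairwise distinct codomain basis vectors. The flattening is then, up to signs, a partial permutation matrix with $N$ nonzero entries, so its rank is $N$.

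The grouping uses the hypothesis. For each $t$ we have two singleton blocks $\{t\}=\lambda_{i_1(t),j_1(t)}=\lambda_{i_2(t),j_2(t)}$ with $d'=0$; their factors are $e_{l_t,i_1(t)}\in V_{i_1(t)}$ and $e_{l_t,i_2(t)}\in V_{i_2(t)}$, each of which records $l_t$. Put all first such factors (over all $t$) into the codomain and all second ones into the domain. Distribute every other factor arbitrarily, except that all the $\bigwedge^{d'_{i,j}}V_i^*$ factors together with their paired $\bigwedge^{d'_{i,j}+|\lambda_{i,j}|}V_i$ factors are placed carefully as described next. The indices $i_1(t),i_2(t)$ are distinct, since the blocks of a single partition are disjoint, so these slots are well defined and disjoint across different $t$ as long as we fix one choice per $t$. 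A collision of slots between different $t$ cannot occur either, because a block $\{t\}$ determines $t$.

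From the codomain factors alone we recover the full tuple $(l_1,\dots,l_k)$. Given the tuple, a term is determined by the subsets $B_{i,j}$, and $B_{i,j}$ can be read off either from $\bigwedge_{b\in B_{i,j}}b^*$ directly or from the wedge $e_{l_{\lambda_{i,j}},i}\wedge\bigwedge B_{i,j}$ once the tuple is known. So whichever side each factor lands on, the codomain determines the term. The same argument applied to the domain factors shows that the domain also determines the term. Injectivity on both sides then follows, and the flattening has rank exactly $N$.

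The main obstacle I expect is the bookkeeping around pairs where the two designated singletons for different $t$ share the same $V_i$, and making sure the required "$\{t\}$ appears twice" covers everything. The argument only needs injectivity, so it works regardless. Tightness of Theorem~\ref{thm:main} then follows because a flattening of rank $N$ forces $\operatorname{rk} T_\pi\ge N$.
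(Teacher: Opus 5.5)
Your strategy (unit tensor, a flattening that becomes a signed partial permutation matrix, and recovering $(l_1,\dots,l_k)$ from the two designated singleton factors $V_{i_1(t)}$, $V_{i_2(t)}$ placed on opposite sides) is the right one. It is essentially the paper's. The gap is in how you place the remaining pairs $\bigl(\bigwedge^{d'_{i,j}+|\lambda_{i,j}|}V_i,\ \bigwedge^{d'_{i,j}}V_i^*\bigr)$. You announce they will be ``placed carefully as described next'', but you never give a rule, and the later claim ``whichever side each factor lands on, the codomain determines the term'' is false.

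To read off the subset $B_{i,j}$ from the codomain, at least one factor of the pair must lie in the codomain. To read it off from the domain, at least one must lie in the domain. If both land on the same side and $d'_{i,j}>0$, terms differing only in $B_{i,j}$ share a row or a column, and the rank drops. A concrete failure is the Koszul case $r=3$, $k=1$, $d'_{1,1}=1$, with your choice of $B$ in the codomain and $C$ in the domain. If both $\bigwedge^2A$ and $A^*$ go to the codomain, you get the flattening $C^*\to\bigwedge^2A\otimes A^*\otimes B$, whose rank is at most $q<q(q-1)=N$ for $q\geq 3$.

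The missing constraint is simple. For every $(i,j)$ with $d'_{i,j}>0$, put $\bigwedge^{d'_{i,j}}V_i^*$ and $\bigwedge^{d'_{i,j}+|\lambda_{i,j}|}V_i$ on opposite sides. Blocks with $d'_{i,j}=0$, other than the designated singletons, may go anywhere, since the tuple already determines them.

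The paper makes one concrete such choice. It takes all $\bigwedge^{d'_{i,j}}V_i^*$ factors together with the singleton factors $V_{i_1(t)}$ as inputs, and everything else as outputs. A domain basis vector then directly lists the $l_t$ and the sets $P_{i,j}$. The codomain recovers $l_t$ from the $V_{i_2(t)}$ factor and then $P_{i,j}=(P_{i,j}\cup L_{i,j})\setminus L_{i,j}$. With the splitting constraint added, your argument becomes the paper's proof, stated for a slightly larger family of admissible flattenings.
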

In particular, the assumptions always hold when $r=3$ and we use only one exterior power.
\begin{proof}
    Let $T$ be the unit tensor $T=\sum_{l=1}^q \otimes_{i=1}^r e_{i,l}$, where the respective $e_{i,l}$ form a basis of each $V_i$. We have $T_\pi\in \bigotimes_{i=1}^r\bigotimes_{j=1}^{s_i}\bigwedge^{d_{i,j}'+|\lambda_{i,j}|}V_i\otimes \bigwedge^{d_{i,j}'}V_i^*$. We choose the flattening of $T_\pi$ with domain \[\left(\bigotimes_{i=1}^r\bigotimes_{j=1}^{s_i} \bigwedge^{d_{i,j}'}V_i\right)
    \otimes \left(\bigotimes_{t=1}^k \bigwedge^{d'_{i_1(t),j_1(t)}+|\lambda_{i_1(t),j_1(t)}|}V_{i_1(t)}^*\right)=\left(\bigotimes_{i=1}^r\bigotimes_{j=1}^{s_i} \bigwedge^{d_{i,j}'}V_i\right)
    \otimes \left( \bigotimes_{t=1}^k V_{i_1(t)}^*\right).\]
    The bound of Theorem~\ref{thm:main} is based on a decomposition that is indexed by compatible tuples $(l_1,\dots,l_k)$ and subsequently $d'_{i,j}$ element subsets of a $(d_{i,j}-|\lambda_{i,j}|)$-element set. Working in the bases of exterior powers induced by the bases on $V_i$'s and with a fixed $(l_1,\dots,l_k)$, we denote $L_{i,j} := \{l_t\mid t\in\lambda_{i,j}\}$ and associate to $(l_1,\dots,l_k)$  and each subset $P_{i,j}\subset [q]\setminus L_{i,j}$ of cardinality $d'_{i,j}$ the basis vector
    \[
    \left( \bigotimes_{i=1}^r\bigotimes_{j=1}^{s_i} \bigwedge_{p\in P_{i,j}}e_{i,p}\right)\otimes \bigotimes_{t=1}^k e_{i_1(t),l_t}^*
    \]
    in the domain. Its image in the flattening will then be
    \[
        \bigotimes_{\substack{
            i\in [r],\ j\in[s_i],\\
            \text{$(i,j)\neq(i_1(t),j_1(t))$ for any $t$}
        }}
        \bigwedge_{p\in P_{i,j}\cup L_{i,j}} e_{i,p}.
    \]
    In particular, this is a basis vector in the codomain. If we now show that one can reconstruct from this image the original data $(l_1,\dots,l_k)$ and $P_{i,j}$, the proposition will follow.

    For this, we first look at the factor indexed by $(i,j)=(i_2(t), j_2(t))$ for every $t$. By our assumptions, this factor is just $V_{i_2(t)}$ and the basis element is $e_{i_2(t),l_t}$. Hence we reconstruct each $l_t$. Thus we know $L_{i,j}$ and may thereafter recover from each factor $P_{i,j} = (P_{i,j}\cup L_{i,j})\setminus L_{i,j}$, finishing the proof.
\end{proof}

We now present two special cases of our general construction, the first of which is well-known.
For $r=3$, $k=1$, $\lambda_{i,j}=\{1\}$ and only one $d_{i,j}'$ is nonzero we obtain the famous Koszul flattenings \cite{langsberg-ottaviani-equationsforsecant, landsberg2008generalizations}.

Consider $r=3$, $k=2$, $\lambda_{1,1}=\{1,2\}$, $\lambda_{2,1}=\lambda_{3,1}=\{1\}$, $\lambda_{2,2}=\lambda_{3,2}=\{2\}$, $d'_{1,1}=1$ and all other $d_{i,j}'$ zero. We consider a special Kronecker-Koszul flattening in this case given by
\[
\bigwedge^3 V_1^* \otimes V_2^*\otimes V_3^* \to V_1^*\otimes V_2\otimes V_3.
\]
We call it the \emph{tangency flattening} and denote it $\tang^{V_1}(T)$.
The following corollary is a consequence of Theorem \ref{thm:main}.
\begin{corollary}\label{cor:tan_flat}
    If $\dim V_1=\dim V_2=\dim V_3=n$, then $q(q-1)(n-2)+1$ minors of the tangency flattening provide equations for border rank $q$.
\end{corollary}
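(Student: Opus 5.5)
The plan is to apply Corollary~\ref{cor:flat bound} to the specific data defining the tangency flattening and then evaluate the resulting bound. The data are $r=3$, $k=2$, $s_1=1$ with $\lambda_{1,1}=\{1,2\}$, and $s_2=s_3=2$ with singleton parts $\{1\},\{2\}$. The only nonzero exterior parameter is $d'_{1,1}=1$.

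First, we compute $F(q)$ via Remark~\ref{rmrk:chromatic}. The graph $G$ on vertex set $\{1,2\}$ has exactly one edge, coming from $\lambda_{1,1}$; the singleton parts contribute no edges. So $\chi_G(q)=q(q-1)$, i.e.\ $F(q)=q(q-1)$.

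Next, we compute the binomial product $\prod_{i,j}\binom{d_i-|\lambda_{i,j}|}{d'_{i,j}}$:
\begin{itemize}
\item For $(i,j)=(1,1)$ it gives $\binom{n-2}{1}=n-2$.
\item For every other pair, $d'_{i,j}=0$, so the factor is $\binom{n-1}{0}=1$.
\end{itemize}
Thus Theorem~\ref{thm:main} says that if $T$ has border rank at most $q$, then $T_\pi$ has border rank at most $q(q-1)(n-2)$.

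To pass to the flattening, we check that $\tang^{V_1}(T)$ is indeed a classical flattening of $T_\pi$. The tensor lives in
\[T_\pi\in \bigwedge^3V_1\otimes V_1^*\otimes V_2\otimes V_2\otimes V_3\otimes V_3.\]
The tangency flattening takes the factors $\bigwedge^3V_1$, one copy of $V_2$ and one copy of $V_3$ as the dualized domain, and the remaining factors $V_1^*\otimes V_2\otimes V_3$ as the codomain. Since the rank of a classical flattening is bounded by the border rank of the tensor (Corollary~\ref{cor:flat bound}), $\rk\tang^{V_1}(T)\le q(q-1)(n-2)$ whenever $T$ has border rank at most $q$.

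Hence all minors of size $q(q-1)(n-2)+1$ vanish on $\sigma_q$ of the Segre variety. These minors are polynomials in $T$, because $T\mapsto T_\pi$ is polynomial of degree $2$. This completes the argument.

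The only delicate point is bookkeeping: checking that the chromatic count and the binomial factors are read off correctly. No genuine obstacle is expected, since the corollary is a direct specialization of Theorem~\ref{thm:main}.
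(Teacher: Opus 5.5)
Your proposal is correct and follows the paper's own proof. You specialize Theorem~\ref{thm:main} and Corollary~\ref{cor:flat bound} with $F(q)=q(q-1)$ (compatibility just means $l_1\neq l_2$) and with the single nontrivial binomial factor $\binom{n-2}{1}=n-2$. Your check that $\tang^{V_1}(T)$ is a classical flattening of $T_\pi\in\bigwedge^3V_1\otimes V_1^*\otimes V_2^{\otimes 2}\otimes V_3^{\otimes 2}$ is bookkeeping that the paper leaves implicit.
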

\begin{proof}
    As $\lambda_{1,1}=\{1,2\}$, compatibility for $(l_1,l_2)$ simply means $l_1\neq l_2$. Thus $F(q)=q(q-1)$, which corresponds to first two factors in each point. Further:
    \[\prod_{i=1}^3 \prod_{j=1}^{s_i}\binom{n-|\lambda_{i,j}|}{d_{i,j}'}=\binom{n-|\lambda_{1,1}|}{d_{1,1}'}=\binom{n-2}{1}=n-2.\]
    This provides the last factor.
\end{proof}

\subsection{Kronecker-Young flattenings}
\label{subsec:kronecker-young}
    In \cite{langsberg-ottaviani-equationsforsecant}, \emph{Young flattenings}, a generalization of Koszul flattenings using other irreducible representations of $\GL(V)$ in place of just $\bigwedge^{d'} V$, are employed. Our construction of Kronecker-Koszul flattenings directly generalizes to Kronecker-Young flattenings, by both projecting to and applying Schur functors different from exterior product.

    In this section we present one explicit construction that we find particularly interesting. By \cite{kaski2025universal} the linear span of image of the $k$-th Kronecker power of tensors in $A\ot B \ot C$ is $S^k(A\ot B \ot C)$. As a $\GL(A)\times \GL(B)\times \GL(C)$ representation, the latter space has a decomposition into isotypic components
    \[S^k(A\ot B \ot C)=\bigoplus_{\lambda,\mu,\rho\vdash k}\left(S^\lambda(A)\ot S^\mu(B)\ot S^\rho(C)\right)^{\bigoplus K_{\lambda,\mu,\rho}},\]
    where $K_{\lambda,\mu,\rho}$ is the Kronecker coefficient. In particular, when $\lambda=\mu=1^k$ and $\rho=(k)$ we have $K_{\lambda,\mu,\rho}=1$. This gives a canonical projection
    \[\pi:S^k(A\ot B \ot C)\rightarrow \bigwedge^k A\ot \bigwedge^k B\ot S^k(C).\]
    An example of Kronecker-Young flattening is the classical flattening of $\pi(T^{\otimes k})$ given by
    \[\Phi_{\mathrm M,k}^C(T):\bigwedge^kA^*\ot\bigwedge^kB^*\rightarrow S^k(C).\]
    The advantage of these choices is that the image of the map $\Phi_{\mathrm M,k}^C(T)$ has a direct interpretation in terms of the flattening $T^C:C^*\rightarrow A\ot B$. Namely, the image of $\Phi_{\mathrm M,k}^C(T)$ is the linear span of all $k\times k$ minors of $T^C$ viewed as a matrix with entries from $C$. For this reason, we call $\Phi_{\mathrm M,k}$ the \emph{$k$-minor flattening}.
\begin{lemma}
    If $\dim A=\dim B=\dim C=n$ and a tensor $T\in A\ot B\ot C$ has border rank at most $n$, then the rank of the $k$-minor flattening $\Phi_{\mathrm M,k}^C(T)$ is at most $\binom{n}{k}$.
\end{lemma}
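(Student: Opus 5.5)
We will follow the proof of Theorem~\ref{thm:main}: first treat tensors of rank at most $n$, then pass to border rank by semicontinuity. The map $T\mapsto \Phi_{\mathrm M,k}^C(T)$ is a polynomial (degree $k$) map into a space of linear maps. The set of linear maps of rank at most $\binom nk$ is Zariski closed. Tensors of rank at most $n$ are dense in $\sigma_n$. So it suffices to prove the bound for $T=\sum_{l=1}^n a_l\ot b_l\ot c_l$.

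For such $T$ we expand
\[T^{\ot k}=\sum_{(l_1,\dots,l_k)\in[n]^k} (a_{l_1}\ot\dots\ot a_{l_k})\ot(b_{l_1}\ot\dots\ot b_{l_k})\ot(c_{l_1}\ot\dots\ot c_{l_k}).\]
The projection $\pi$ is equivariant and factors through the antisymmetrization on the $A$ and $B$ factors and the symmetrization on the $C$ factor (up to a nonzero scalar normalization, since the multiplicity $K_{1^k,1^k,(k)}=1$). So the image of each term is a scalar multiple of
\[(a_{l_1}\wedge\dots\wedge a_{l_k})\ot(b_{l_1}\wedge\dots\wedge b_{l_k})\ot c_{l_1}\cdots c_{l_k}.\]
This term vanishes unless the $l_t$ are pairwise distinct.

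Next we group the terms by the underlying $k$-subset $L=\{l_1,\dots,l_k\}\subset[n]$. Permuting the indices changes both wedge factors by the same sign, so the signs cancel, while the symmetric product is unchanged. Hence
\[\pi(T^{\ot k})=c\sum_{L\in\binom{[n]}{k}} a_L\ot b_L\ot c^L,\qquad a_L=\bigwedge_{l\in L}a_l,\ b_L=\bigwedge_{l\in L}b_l,\ c^L=\prod_{l\in L}c_l,\]
with $c=k!$ or a normalization constant. This is a sum of $\binom nk$ rank one tensors. Therefore every flattening, and in particular $\bigwedge^kA^*\ot\bigwedge^kB^*\to S^kC$, has rank at most $\binom nk$.

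The main obstacle is the step identifying $\pi$ with antisymmetrize/symmetrize. One must check that the unique copy of $\bigwedge^kA\ot\bigwedge^kB\ot S^kC$ inside $S^k(A\ot B\ot C)$ is obtained by restricting the natural map $(A\ot B\ot C)^{\ot k}\to \bigwedge^kA\ot\bigwedge^kB\ot S^kC$. That map is equivariant, and it is nonzero on $S^k$: the diagonal $(\cdot)^{\ot k}$ of a concise unit tensor maps to something nonzero by the computation above. By Schur's lemma and multiplicity one, it therefore agrees with $\pi$ up to a scalar. Alternatively, one can avoid the issue entirely: the image of the flattening is spanned by the $k\times k$ minors of $T^C$. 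For rank-$n$ $T$, the Cauchy--Binet formula expresses each minor as a combination of the $\binom nk$ monomials $c^L$, which bounds the rank of the image directly.
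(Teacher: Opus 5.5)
Your proof is correct, but it takes a different route from the paper's.

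\textbf{The paper's route.} The paper uses equivariance. Since $\sigma_n$ in $\CC^n\ot\CC^n\ot\CC^n$ is the closure of the $\GL(A)\times\GL(B)\times\GL(C)$-orbit of the unit tensor $I_n$, it suffices to compute $\Phi_{\mathrm M,k}^C(I_n)$. Using the interpretation of the image as the span of the $k\times k$ minors of $T^C$, it observes that the minors of $\mathrm{diag}(c_1,\dots,c_n)$ span exactly the $\binom nk$ squarefree monomials.

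\textbf{Your route.} Your main argument follows the proof of Theorem~\ref{thm:main} instead. You decompose $\pi(T^{\ot k})$, for an arbitrary tensor of rank at most $n$, into $\binom nk$ rank-one terms $a_L\ot b_L\ot c^L$. Before that, you correctly identify $\pi$, up to a nonzero scalar, with antisymmetrization on $A,B$ and symmetrization on $C$, using Schur's lemma and $K_{1^k,1^k,(k)}=1$.

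\textbf{What each buys.} Your argument gives a stronger and more general statement. The Kronecker--Young tensor $\pi(T^{\ot k})$ itself has border rank at most $\binom nk$, so every one of its flattenings is bounded, not just $\Phi_{\mathrm M,k}^C$. The same computation also gives $\binom qk$ for border rank $q$ in arbitrary dimensions, without needing $\sigma_q$ to be a single orbit closure. The paper's route is shorter once the minor interpretation is available, and it also shows the bound is attained at $I_n$.

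Your Cauchy--Binet alternative writes $T^C=P\,\mathrm{diag}(c_1,\dots,c_n)\,Q^{T}$. This is essentially the paper's argument with the equivariance made explicit, which lets you skip the orbit-closure step.
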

\begin{proof}
    The $n$-th secant variety of the Segre variety is the closure of the $\GL(A)\times \GL(B)\times \GL(C)$ orbit of the unit tensor $I_n:=\sum_{i=1}^n a_i\ot b_i\ot c_i$, where $a_i$, $b_i$, $c_i$ form respectively bases of $A$, $B$, $C$. As our constructions are continuous and equivariant, it is enough to prove that $\rk \Phi_{\mathrm M,k}^C(I_n)=\binom{n}{k} $. We note that $I_n^C(C^*)$ is the space of diagonal matrices. Clearly, the linear span of $k\times k$ minors of the diagonal matrices is the linear span of squarefree monomials of degree $k$ in $n$ variables and thus has dimension $\binom{n}{k}$.
\end{proof}

The $k$-minor flattening $\Phi_{\mathrm M,k}^C$ has a few striking benefits:
\begin{enumerate}
    \item Its rank may be computed fast, even for $n$ and $k$ relatively large.
    \item It has a potential of proving nonsmoothability of Gorenstein schemes belonging to components of the Hilbert scheme with general tangent space larger than dimension of the smoothable component.
\end{enumerate}
Explicit examples justifying the second point are presented in Subsection \ref{subsec:minors-and-algs}.
    We justify the first point below. Given a tensor $T$, in order to lower-bound the dimension of the image of $\Phi_{\mathrm M,k}^C(T)$ by $r$, we do not have to take formal linear combinations of $\binom{n}{k}^2$ possible minors. Instead it is possible to:
    \begin{enumerate}
        \item Choose $r$ general points $p_i\in C^*$ and $r$ general pairs of matrices $U_j\in \Hom(\CC^k, A)$, $V_j\in\Hom(\CC^k, B)$.
        \item Form an $r\times r$ matrix $(m_{ij})$ by taking $m_{ij}$ to be the determinant of the $k\times k$ matrix $U_j^T T^C(p_i) V_j$.
        \item Compute the rank of $(m_{ij})$.
    \end{enumerate}
Clearly, if the method above returns a matrix of rank $r'$, then $\rk \Phi_{\mathrm M,k}^C(T)\geq r'$. Further, if $r\geq \rk\Phi_{\mathrm M,k}^C(T)$, then the method returns $r' = \rk\Phi_{\mathrm M,k}^C(T)$ with probability $1$.

\section{Koszul flattenings}
\label{sec:koszul}

Throughout this section, let us specialize to $3$-tensors, denoting the three starting vector spaces as $A$, $B$ and $C$. Whenever $T\in A\otimes B\otimes C$ is a tensor, we use $\koszul^{A,B}(T)$ to denote the first Koszul flattening
\begin{align*}
    \bigwedge^2 A^* \otimes B^* &\to A^*\otimes C,\\
    (\alpha\wedge\alpha')\otimes \beta &\mapsto \alpha\otimes T(\alpha',\beta) - \alpha'\otimes T(\alpha,\beta).
\end{align*}
In bases, it can be described as a matrix with a particular blocking: letting $n:=\dim A$, choosing a basis $\alpha_1,\dots,\alpha_n$ of $A^*$ and $M_i:= T(\alpha_i)\in B\otimes C$, then $\koszul^{A,B}(T)$ is the block matrix
\begin{equation}
    \label{eq:koszul-blocks}
    \bordermatrix{
        ~ & \{1,2\} & \dots & \{i,j\} & \dots & \{n-1,n\} \cr
        1 & M_2 & & \cdots & & 0\cr
        2 & -M_1 & & & & 0\cr
        \vdots & & \ddots\cr
        i & \vdots & & \epsilon_{i,j} M_j & & \cdots\cr
        \vdots & & & & \ddots\cr
        n-1 & 0 & & & & M_{n-1}\cr
        n & 0 & & \cdots & & -M_{n-1}  \crcr
    },
\end{equation}
where $\epsilon_{i,j} := 1$ if $i<j$ and $-1$ if $i>j$. We define the five other flattenings $\koszul^{A,C}$, $\koszul^{B,A}$, $\koszul^{B,C}$, $\koszul^{C,A}$ and $\koszul^{C,B}$ analogously.

\subsection{Structure tensors of algebras}
\label{subsec:koszul-and-algebras}

Let us adopt the convention that structure tensors of commutative algebras $R$ are always considered as elements of $R^*\otimes R^*\otimes R$, i.e. the partial symmetry is between the first and second factors.

Due to this partial symmetry, the structure tensor $T$ essentially has only three Koszul flattenings: $\koszul^{A,B}(T)$, $\koszul^{A,C}(T)$ and $\koszul^{C,A}(T)$. The following lemma is known -- it is similar to interpretations of Strassen's commutativity equations \cite{strassen1983rank} due to Landsberg, Manivel and Ottaviani \cite{landsberg2008generalizations, ottaviani2008symplectic}, \cite[Section 2.1]{landsberg2017abelian}.
\begin{lemma}\label{lem:easy rank Koszul for algebras}
    For any structure tensor $T$ of a commutative algebra $R$ the Koszul flattenings $\koszul^{A,B}(T)$ and $\koszul^{A,C}(T)$ both have rank $n(n-1)$.
\end{lemma}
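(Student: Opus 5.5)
The plan is to identify the Koszul flattenings concretely in terms of the multiplication of $R$ and then show that each has corank exactly $n=\dim R$ in its codomain. With $A=B=R^*$ and $C=R$, we have $A^*=B^*=R$ and $C^*=R^*$, and $T(\alpha,\beta)=\alpha\beta$ for $\alpha,\beta\in R$. So $\koszul^{A,B}(T)$ becomes the map
\[
\bigwedge^2R\otimes R\to R\otimes R,\qquad (x\wedge y)\otimes z\mapsto x\otimes yz-y\otimes xz .
\]
Similarly, $\koszul^{A,C}(T)$ becomes the map
\[
\bigwedge^2R\otimes R^*\to R\otimes R^*,\qquad (x\wedge y)\otimes\varphi\mapsto x\otimes \varphi(y\,\cdot)-y\otimes\varphi(x\,\cdot).
\]
Here we identify $B=R^*$ and use that contracting $T$ with $x\in A^*$ and $\varphi\in C^*$ gives the functional $w\mapsto\varphi(xw)$. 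Both codomains have dimension $n^2$, so it suffices to show that each image has dimension exactly $n^2-n$.

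For $\koszul^{A,B}(T)$, the upper bound comes from the multiplication map $\mu\colon R\otimes R\to R$. Composing gives $\mu(x\otimes yz-y\otimes xz)=xyz-yxz=0$ by commutativity, so the image lies in $\ker\mu$. Since $R$ is unital, $\mu$ is surjective, so $\dim\ker\mu=n^2-n$. For the lower bound, take $y=1$: the image contains $x\otimes z-1\otimes xz$ for all $x,z$. Any $\sum_i a_i\otimes b_i\in\ker\mu$ can be rewritten as $\sum_i(a_i\otimes b_i-1\otimes a_ib_i)$, so these elements span $\ker\mu$. Hence the image equals $\ker\mu$ and the rank is $n(n-1)$.

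For $\koszul^{A,C}(T)$, I would argue dually. For each $w\in R$, define the functional $f_w$ on $R\otimes R^*$ by $x\otimes\psi\mapsto\psi(xw)$. Evaluating on an image vector gives $\varphi(yxw)-\varphi(xyw)=0$, again by commutativity, so the image lies in $\bigcap_w\ker f_w$. The map $w\mapsto f_w$ is injective, since $f_w(1\otimes\psi)=\psi(w)$, so this intersection has codimension $n$. For the lower bound, take $y=1$: the image contains $x\otimes\varphi-1\otimes\varphi(x\,\cdot)$. Modulo these vectors, every $x\otimes\varphi$ is congruent to an element of $1\otimes R^*$. So the quotient of $R\otimes R^*$ by the image has dimension at most $n$, and the rank is at least $n^2-n$.

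The only potentially delicate step is setting up the identifications and sign conventions of the Koszul maps correctly. After that, each argument is a two-line check using commutativity and the unit of $R$.
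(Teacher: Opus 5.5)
Your proof is correct. It uses the same two inputs as the paper: the unit of $R$ gives the lower bound and commutativity gives the upper bound.

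Your choice $y=1$ is exactly the paper's use of the block columns $\{1,j\}$, which carry $-\id$ in block row $j$ once $M_1=T(1)=\id_R$. Your observation that $R\otimes R^*$ is spanned by the image together with $1\otimes R^*$ is the statement that these columns reach every block row except the first.

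Where you differ is in certifying the upper bound. The paper uses the identity blocks to clear the remaining block columns by row and column operations. This leaves only commutators $M_iM_j-M_jM_i$, which vanish. You instead exhibit $n$ independent linear relations satisfied by the image: the multiplication map $\mu$ for $\koszul^{A,B}(T)$, and the functionals $f_w$ for $\koszul^{A,C}(T)$.

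The paper's elimination handles both flattenings in one computation. It only needs the blocks to commute and one of them to be the identity, which holds for multiplication operators and for their transposes alike. Your version needs a separate, dual certificate for each flattening. In return it identifies the images intrinsically, $\im\koszul^{A,B}(T)=\ker\mu$ and $\im\koszul^{A,C}(T)=\bigcap_w\ker f_w$. This is the same kind of image-level reasoning the paper uses later in Proposition~\ref{prop:koszul-equals-quad-on-algs}.
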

\begin{proof}
    As $R$ is commutative, matrices in the space $T(R)\subset R^*\otimes R\simeq \Hom(R,R)$ commute with one another. Further, as $1\in R$ we have $T(1)=\id_R\in T(R)$. Thus, in \eqref{eq:koszul-blocks} we may take $M_1=\id_R$. We may use these $n-1$ full-rank blocks $M_1$ to eliminate the other blocks using row and column operations. In general, this creates blocks of the form $M_iM_j-M_jM_i$ in other positions. In our case, all such blocks will vanish due to the commuting property of $T(R)$. Thus, the ranks of $\koszul^{A,B}(T)$ and $\koszul^{A,C}(T)$ are exactly $(\rk M_1)(n-1)=n(n-1)$.
\end{proof}

The previous lemma implies that  the flattenings $\koszul^{A,B}(T)$ and $\koszul^{A,C}(T)$ may never be used to prove nonsmoothability of the algebra. Further, the same argument holds even after restricting to a subspace $V\subset R$ and arguing for the tensor $T'\in V^*\otimes R^*\otimes R$. This often led to the wrong impression that Koszul flattenings can never be used to prove nonsmoothability of algebras. Indeed, when $R$ is Gorenstein, the tensor $T$ is symmetric \cite[Theorem p.~1507]{huang2020vanishing} and similar conclusions can be obtained for different flattenings.
Surprisingly however, in general, the other Koszul flattening $\koszul^{C,A}(T)$ can yield nontrivial results:

\begin{proposition}
    \label{prop:1de-koszul}
    Let $R$ be the apolar algebra to $e\leq \binom{d+1}2$ general quadrics in $d$ variables. If $e\geq3$, then the structure tensor $T$ of $R$ has border cactus rank at least $\floor{\frac32d}+3$. In particular, if $3\leq e\leq \floor{\frac d2}+1$, then $R$ is nonsmoothable.
\end{proposition}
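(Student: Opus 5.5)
The plan is to bound the border cactus rank from below by the rank of a suitable restriction of $\koszul^{C,A}(T)$, for a tensor $T\in A\otimes B\otimes C$ with $A=B=R^*$ and $C=R$. Koszul flattenings are rank methods, so by \cite{Galazka-bundles, jarek_cactus} their rank bounds hold on the cactus variety as well as the secant variety. Concretely, I restrict $C^*\to C'^*$ with $\dim C'=3$. The Koszul flattening $C'^*\otimes A^*\to\bigwedge^2 C'^*\otimes B$ of a tensor of border cactus rank $\le q$ then has rank at most $2q$, by the same computation as in Theorem~\ref{thm:main} with $k=1$ and $\binom{3-1}{1}=2$.

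The goal is therefore to show that the restricted flattening has rank at least $2(\floor{\frac32 d}+2)+1$. I will use the Hilbert function of $R$, which is $(1,d,e)$, so $n=1+d+e$. Since $e\ge3$, I can choose three functionals $\gamma_1,\gamma_2,\gamma_3\in R^*=C^*$ that are dual to three independent elements of $R_2$ and kill $R_0\oplus R_1$.

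Each $M_i:=T(\gamma_i)$ is then the symmetric bilinear form $(a,b)\mapsto\gamma_i(ab)$ on $R$. It has two pieces:
\begin{itemize}
\item an $R_1\times R_1$ block, which is essentially the quadric $Q_i$ whose apolar algebra is $R$ (general, since the quadrics are general);
\item $R_0\times R_2$ and $R_2\times R_0$ blocks, pairing $1$ with the element of $R_2$ dual to $\gamma_i$.
\end{itemize}
The flattening is the $3n\times 3n$ block matrix
\[
\begin{pmatrix} 0 & M_3 & -M_2\\ -M_3 & 0 & M_1\\ M_2 & -M_1 & 0\end{pmatrix}.
\]
I will reorder the rows and columns according to the grading $R_0\oplus R_1\oplus R_2$, so that it splits into a contribution from the degree-$1$ parts and a contribution from the degree $0$/degree $2$ parts. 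This splitting uses the fact that $\gamma_i(ab)$ is nonzero only when $\deg a+\deg b=2$.

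The degree-$1$ part is the classical $3\times3$ Koszul matrix built from three general symmetric $d\times d$ matrices. For general such matrices its rank is $3d$ if $d$ is even and $3d-1$ if $d$ is odd, i.e.\ $2\floor{\frac32 d}$. Parity forces this: the matrix is skew-symmetric when the $M_i$ are symmetric. Genericity gives the lower bound, which I would check by exhibiting one explicit triple, e.g.\ diagonal or tridiagonal forms, attaining it.

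The degree $0$/degree $2$ part involves the three rows and columns indexed by $R_0$ in each block, together with the three $R_2$ directions dual to the $\gamma_i$. It should contribute an additional rank of at least $6$ (and at least $5$ suffices). Since these rows and columns are disjoint from the degree-$1$ ones, the ranks add.

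Hence the rank is at least $2\floor{\frac32 d}+5$, and so $2q\ge 2\floor{\frac32d}+5$, giving $q\ge\floor{\frac32d}+3$. I expect the main obstacle to be the careful verification that the $R_0/R_2$ block really adds $5$ or $6$ independent ranks beyond the $R_1$ block. In particular, the skew-symmetric combination of the three $1\leftrightarrow r_i$ pairings must not degenerate. I would check this by writing the $6\times 6$ (or $6\times(3+3e)$) submatrix explicitly.

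For nonsmoothability: a smoothable algebra has structure tensor of minimal border rank $n$ \cite{blaser2016degeneration}, and border cactus rank is at most border rank. So smoothability would force $n=1+d+e\ge\floor{\frac32 d}+3$. This fails exactly when $e\le\floor{\frac32 d}-d+1=\floor{\frac d2}+1$, so in that range $R$ is nonsmoothable.
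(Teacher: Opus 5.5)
Your overall plan matches the paper's proof. You restrict to a three-dimensional subspace of $C^*=R^*$ and take $\koszul^{C,A}$. You use that rank-one tensors give rank $2$ and that this bound persists on the cactus variety \cite{Galazka-bundles, jarek_cactus}. You then split the rank as $6$ (degrees $0$ and $2$) plus $2\floor{\frac32 d}$ (degree $1$). Choosing the $\gamma_i$ inside $R_2^*$ is a legitimate simplification, since any specialization gives a lower bound and border cactus rank does not increase under restriction. It makes the splitting exactly block diagonal, whereas the paper eliminates with general $\gamma_i$.

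You have, however, misplaced the difficulty. The degree $0$/$2$ piece is immediate. Let $\mathbf x,\mathbf y,\mathbf z\in\CC^e$ be the independent vectors $\gamma_i|_{R_2}$. The $3\times 3e$ block with rows $(0,\mathbf z^T,-\mathbf y^T)$, $(-\mathbf z^T,0,\mathbf x^T)$, $(\mathbf y^T,-\mathbf x^T,0)$ has rank $3$, as you can see from its first two block columns. The transposed piece gives another $3$, so you get exactly $6$.

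The genuine gap is the lower bound $2\floor{\frac32 d}$ for the degree-one block. This is the real content of the proposition, and you leave it to an unspecified ``explicit triple''. Your suggested witness fails. If $S_1,S_2,S_3$ are diagonal, or merely commuting, the $3d\times 3d$ matrix splits into $d$ skew-symmetric $3\times 3$ blocks of rank $2$. Its rank is then only $2d$.

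The missing idea is that the excess over $2d$ is measured by a commutator.
\begin{itemize}
\item Write $S_1=PP^T$ and replace each $S_i$ by $P^{-1}S_i(P^{-1})^T$. This preserves symmetry and the rank of the block matrix, so you may assume $S_1=\id$.
\item The two (signed) identity blocks then let you row- and column-reduce to rank $2d+\rk(S_2S_3-S_3S_2)$.
\item The commutator of two symmetric matrices is skew-symmetric, hence has rank at most $2\floor{\frac d2}$. This is attained, e.g., by the anti-diagonal exchange matrix together with $\mathrm{diag}(1,\dots,1,0,\dots,0)$ having $\floor{\frac d2}$ ones.
\item Semicontinuity transfers the bound $2d+2\floor{\frac d2}=2\floor{\frac32 d}$ to the Hessians of general quadrics.
\end{itemize}
With this step supplied, the rest of your argument goes through, including the nonsmoothability deduction.
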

\begin{proof}
    By the apolar construction, $R$ is a graded algebra with Hilbert function $(1,d,e)$ and hence dimension $n := 1+d+e$.
    Let us choose a general three-dimensional subspace $V\subset R^*$ and consider the tensor $T'\in R^*\otimes R^*\otimes V^*$ and its Koszul flattening $\koszul^{C,A}(T')$. This amounts to choosing general matrices $M_i \in T(R^*)\subset R^*\otimes R^*$, $i=1,2,3$ and building the matrix
    \[
        K := \begin{pmatrix}
            M_2 & M_3 &\\
            -M_1 & & M_3\\
            & -M_1 & -M_2
        \end{pmatrix}.
    \]
    We will show that $\rk K = 6+2\floor{\frac32d}$. Denoting border cactus rank $\bcr$, \cite[Theorem 1.18]{galazka-thesis}, \cite{jarek_cactus} will then imply $\bcr(T)\geq\bcr(T')\geq 3+\floor{\frac32d}$, since any rank $1$ tensor $Z\in R^*\otimes R^*\otimes V^*$ will get $\rk\koszul^{C,A}(Z)=2$.

    A matrix $T(\gamma)\in T(R^*)\subset R^*\otimes R^*$ has the form
    \[
        \begin{pmatrix}
            x_0 & \mathbf x_1^T & \mathbf x_2^T\\
            \mathbf x_1 & S(\mathbf x_2) & \\
            \mathbf x_2
        \end{pmatrix},
    \]
    where $x_0$, $\mathbf x_1$ and $\mathbf x_2$ are (vectors of) coordinates describing the linear form $\gamma$ respectively on degree $0$, $1$ and $2$ parts of $R$, and $S(\mathbf x_2)$ is a $d\times d$ symmetric matrix linearly dependent on $\mathbf x_2$. Picking three such matrices for $M_i$, we obtain
    \[
        K = \begin{pmatrix}
            y_0 & \mathbf y_1^T & \mathbf y_2^T & z_0 & \mathbf z_1^T & \mathbf z_2^T & & & \\
            \mathbf y_1 & S(\mathbf y_2) & & \mathbf z_1 & S(\mathbf z_2) & & & &\\
            \mathbf y_2 & & & \mathbf z_2\\
            -x_0 & -\mathbf x_1^T & -\mathbf x_2^T & & & & z_0 &\mathbf z_1^T & \mathbf z_2^2\\
            -\mathbf x_1 & -S(\mathbf x_2) & & & & & \mathbf z_1 & S(\mathbf z_2) & \\
            -\mathbf x_2 & & & & & & \mathbf z_2 \\
            &&& -x_0 & -\mathbf x_1^T & -\mathbf x_2^T & -y_0 & -\mathbf y_1^T & -\mathbf y_2^T\\
            &&& -\mathbf x_1 & -S(\mathbf x_2) & & -\mathbf y_1 & -S(\mathbf y_2) &\\
            &&& -\mathbf x_2 & & & -\mathbf y_2
        \end{pmatrix}.
    \]
    First, we observe that the initial rows and initial columns of each block altogether contribute $6$ to the rank -- the vectors $\mathbf x_2$, $\mathbf y_2$ and $\mathbf z_2$ are chosen generally from an $e$-dimensional space, so with $e\geq 3$ they are linearly independent. After using these parts of the blocking to eliminate the rest of their respective rows and columns, we denote $U:=S(\mathbf x_2)$, $V:=S(\mathbf y_2)$, $W:=S(\mathbf z_2)$ and are the left to consider
    \[
        \begin{pmatrix}
            V & W & \\
            -U & & W\\
            & -U & -V
        \end{pmatrix}.
    \]

    Since the algebra $R$ was chosen generally and $U$, $V$, $W$ are chosen generally in $T(R^*)$, these are altogether just three general symmetric $d\times d$ matrices; in particular, $U$ is of full rank. By decomposing $U=PP^T$ and replacing $V$, $W$ with $P^{-1}V(P^{-1})^T$, $P^{-1}W(P^{-1})^T$ respectively, we may assume $U$ is the identity matrix. Next, by row and column operations we transform
    \[
        \begin{pmatrix}
            V & W & \\
            -\id & & W\\
            & -\id & -V
        \end{pmatrix}
        \qquad\text{into}\qquad
        \begin{pmatrix}
            & & VW-WV\\
            -\id & &\\
            & -\id &
        \end{pmatrix}.
    \]
    Thus it remains to show that for general $d\times d$ symmetric matrices $V$, $W$, the commutator $VW-WV$ has rank $2\floor{\frac d2}$. Firstly, this commutator will be antisymmetric, hence it can have rank at most $2\floor{\frac d2}$. By semicontinuity of rank, it then suffices to show that there exists a pair of symmetric matrices $V$, $W$ for which $\rk(VW-WV) = 2\floor{\frac d2}$. This is easily verified e.g. for
    \[
        V = \begin{pmatrix}
            &&1\\
            &\iddots&\\
            1&&
        \end{pmatrix}
        \qquad\text{and}\qquad
        W = \underbrace{
            \hbox spread-60pt{$\displaystyle
            \begin{pmatrix}
                1&&&&&\\
                &\ddots&&&&\\
                &&1&&&\\
                &&&0&&\\
                &&&&\ddots&\\
                &&&&&0
            \end{pmatrix}.
            $\hss}
        }_{\text{$\floor{\frac d2}$ ones}}
        \qedhere
    \]
\end{proof}
The study of smoothability of algebras of type $(1,d,e)$ is a classical topic \cite{shafarevich1990deformations}.
We note that for $d=4$ and $e=3$ we obtain the classical result about the reducibility of the Hilbert scheme of $8$ points \cite{iarrobino1978some, cartwright2009hilbert}. For other results on components of Hilbert schemes we refer to \cite{satriano2023small, jelisiejew2019elementary}. However, to our knowledge the bounds on the border cactus rank beyond the dimension of the algebra are new. 
Further, Proposition \ref{prop:1de-koszul} is not exhaustive -- $\koszul^{C,A}$ may prove nonsmoothability in other cases as well, though the block structure and the subsequent rank analysis become more complicated as the Hilbert function acquires more nonzero entries.
\begin{example}
    \label{ex:1463}
    Let $q_1,q_2,q_3\in\CC[x_1,x_2]$ be two general cubics and $l_{i,1},l_{i,2}\in\CC[y_1,y_2,y_3,y_4]$, $i=1,2,3$ six general linear forms. Then we set $f_i:=q_i(l_{i,1},l_{i,2})\in \CC[y_1,\dots,y_4]$ and let $R$ be the apolar algebra to $\{f_1,f_2,f_3\}$. Its Hilbert function is $(1,4,6,3)$, its dimension $14$, but a direct computer-assisted computation with its structure tensor $T$ yields\footnote{As we are making general choices, formally we prove $\rank \koszul^{C,A}(T) \geq 184$ and equality holds for general $T$ with probability one.} $\rank \koszul^{C,A}(T) = 184 > 14\cdot13$, hence $R$ is nonsmoothable.
\end{example}

\subsection{Comparison of Koszul flattenings with a quadratic flattening}
\label{subsec:koszul-and-quadratic}

Let us consider a quadratic Kronecker-Koszul flattening stemming from $k=2$, a single nontrivial partition $\lambda_{1,1} = \{1,2\}$ and all $d'_{i,j}=0$. Let us denote it
\begin{align*}
    \Phi_{\mathrm Q}^A(T): \bigwedge^2 A^* \otimes B^* \otimes C^* &\to C\otimes B,\\
    (\alpha\wedge\alpha')\otimes\beta\otimes\gamma &\mapsto T(\alpha,\beta)\otimes T(\alpha',\gamma) - T(\alpha',\beta)\otimes T(\alpha,\gamma).
\end{align*}
We define $\Phi_{\mathrm Q}^B$ and $\Phi_{\mathrm Q}^C$ analogously.

\begin{proposition}
    \label{prop:quadratic-better-than-koszul}
    The space $\im\Phi_{\mathrm Q}^C(T)\subset A\otimes B$ equals the sum of all $(T(\alpha)\otimes \id_A)\left(\im\koszul^{C,B}(T)\right)$ over $\alpha\in A^*$, where we consider $T(\alpha)$ as a map $C^*\to B$. In particular, if $\dim B=\dim C$ and $T$ is $1_A$-generic, this implies $\rank \Phi_{\mathrm Q}^C(T) \geq \rank\koszul^{C,B}(T)$.
\end{proposition}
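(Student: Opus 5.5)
The plan is to unwind both maps on decomposable elements of the domain and observe that, for a fixed $\alpha\in A^*$, the quadratic flattening is literally the Koszul flattening followed by $T(\alpha)\otimes\id_A$. By analogy with the displayed definition of $\Phi_{\mathrm Q}^A$, I take $\Phi_{\mathrm Q}^C(T)$ to be the map
\[
\bigwedge^2 C^*\otimes A^*\otimes B^* \to B\otimes A,\qquad (\gamma\wedge\gamma')\otimes\alpha\otimes\beta\mapsto T(\alpha,\gamma)\otimes T(\beta,\gamma')-T(\alpha,\gamma')\otimes T(\beta,\gamma),
\]
where $T(\alpha,\gamma)\in B$ and $T(\beta,\gamma')\in A$. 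The codomain is identified with $A\otimes B$ in the obvious way. Similarly, $\koszul^{C,B}(T)$ is the map
\[
\bigwedge^2C^*\otimes B^*\to C^*\otimes A,\qquad (\gamma\wedge\gamma')\otimes\beta\mapsto \gamma\otimes T(\beta,\gamma')-\gamma'\otimes T(\beta,\gamma).
\]

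First, fix $\alpha\in A^*$ and view $T(\alpha)$ as a linear map $C^*\to B$, $\gamma\mapsto T(\alpha,\gamma)$. Applying $T(\alpha)\otimes\id_A$ to $\koszul^{C,B}(T)\big((\gamma\wedge\gamma')\otimes\beta\big)$ gives $T(\alpha,\gamma)\otimes T(\beta,\gamma')-T(\alpha,\gamma')\otimes T(\beta,\gamma)$. This is exactly $\Phi_{\mathrm Q}^C(T)\big((\gamma\wedge\gamma')\otimes\alpha\otimes\beta\big)$. Hence, by linearity in the $\bigwedge^2C^*\otimes B^*$ factor,
\[
\Phi_{\mathrm Q}^C(T)\big(\textstyle\bigwedge^2C^*\otimes \alpha\otimes B^*\big)=(T(\alpha)\otimes\id_A)\big(\im\koszul^{C,B}(T)\big).
\]
Since the domain of $\Phi_{\mathrm Q}^C(T)$ is spanned by the subspaces $\bigwedge^2C^*\otimes\alpha\otimes B^*$ as $\alpha$ ranges over $A^*$ (indeed over a basis of $A^*$), the image of $\Phi_{\mathrm Q}^C(T)$ is the sum of these images. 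This gives the first claim.

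For the second claim, $1_A$-genericity together with $\dim B=\dim C$ provides an $\alpha$ for which $T(\alpha)\colon C^*\to B$ is an isomorphism. Then $T(\alpha)\otimes\id_A$ is injective, so the corresponding summand has dimension exactly $\rank\koszul^{C,B}(T)$. That summand is contained in $\im\Phi_{\mathrm Q}^C(T)$, which yields the inequality.

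The only real obstacle is bookkeeping. One must fix the conventions for $\Phi_{\mathrm Q}^C$: which factor is contracted with $\gamma$ versus $\gamma'$, and the ordering of $A\otimes B$ against $B\otimes A$. These must be chosen so that the signs and factors match those of $\koszul^{C,B}$ precisely. Once they are pinned down as above, every step is a direct computation on decomposable tensors.
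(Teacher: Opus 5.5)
Your proof is correct and follows essentially the same route as the paper. The paper writes $\Phi_{\mathrm Q}^C(T)$, up to reordering of factors, as the composition $(T^B\otimes\id_A)\circ(\id_{A^*}\otimes\koszul^{C,B}(T))$, which is your fiberwise identity $(T(\alpha)\otimes\id_A)\circ\koszul^{C,B}(T)$ assembled over all $\alpha\in A^*$, and it derives the inequality from the same injectivity of $T(\alpha)\otimes\id_A$ for a full-rank $T(\alpha)$.
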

\begin{proof}
    Evaluating on any element $\alpha\otimes(\gamma\wedge\gamma')\otimes\beta$, it is easily verified that the composition
    \[\begin{tikzcd}
        A^*\otimes \bigwedge^2 C^* \otimes B^* \arrow[rrr, "\id_{A^*}\otimes\koszul^{C,B}(T)"] &&& A^*\otimes C^*\otimes A \arrow[rr, "T^B\otimes \id_A"] && B\otimes A
    \end{tikzcd}\]
    (where the second arrow uses the classical flattening $T^B: A^*\otimes C^*\to B$) equals $\Phi_{\mathrm Q}^C$ up to reordering of factors. The image of this composition is clearly the sum of all $T(\alpha)\left(\im\koszul^{C,B}(T)\right)$, which proves the first part of the proposition. When $T$ is $1_A$-generic, we take an $\alpha\in A^*$ such that $T(\alpha)$ is of full rank. Then $T(\alpha)\otimes\id_{A}$ is injective, hence at least one summand contributing to $\im\Phi_{\mathrm Q}^C(T)$ is as large as $\im \koszul^{C,B}(T)$, proving the inequality.
\end{proof}

The proposition means that under a genericity assumption, $\Phi_{\mathrm Q}^C$ is ``at least as useful'' as $\koszul^{C,B}$ for attempting to prove nonminimal border rank of a tensor $T\in \CC^n\otimes\CC^n\otimes\CC^n$, since for both of these flattenings Corollary~\ref{cor:flat bound} gives $n(n-1)$ as the bound on the dimension of the image
to be overcome. However, this still leaves open the possibility that $\Phi_{\mathrm Q}^C$ might not prove nonsmoothability, but $\koszul^{C,B}$ does when considered after projecting to a smaller space similar to the proof of Proposition~\ref{prop:1de-koszul}.

There do exist cases where the inequality of Proposition~\ref{prop:quadratic-better-than-koszul} becomes strict, i.e.~the quadratic flattening strictly outperforms the corresponding Koszul flattening.
\begin{example}
    Let $T_1$ be a general rank $6$ tensor in $\CC^5\otimes\CC^5\otimes\CC^5$ and $T_2\in\CC^4\otimes\CC^4\otimes\CC^4$ the structure tensor of the algebra $\CC[x,y,z]/((x,y,z)^2)$. Then letting $T:=T_1\oplus T_2\oplus T_2\in \CC^{13}\otimes\CC^{13}\otimes\CC^{13}$, it may be verified by a direct calculation that $\rank \koszul^{C, B}(T) = 13\cdot 12$, but $\rank \Phi_{\mathrm Q}^C(T) = 13\cdot12+1$. Hence $\Phi_{\mathrm Q}^C$ proves nonminimal border rank, but $\koszul^{C,B}$ does not.
\end{example}

For structure tensors of commutative algebras however, we can prove an equality -- our quadratic flattening and the corresponding Koszul flattening give the same information:
\begin{proposition}
    \label{prop:koszul-equals-quad-on-algs}
    Suppose that $T\in R^*\otimes R^*\otimes R = A\otimes B\otimes C$ is the structure tensor of an $n$-dimensional commutative algebra $R$. Then
    \begin{equation}
        \label{eq:koszul-quaddratic-algebra}
        \rk \Phi_{\mathrm Q}^C(T) = \rk \koszul^{C,B}(T)
        \qquad\text{and}\qquad
        \rk \Phi_{\mathrm Q}^A(T) = \rk \koszul^{A,C}(T) = n(n-1).
    \end{equation}
\end{proposition}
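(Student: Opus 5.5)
The plan is to use Proposition~\ref{prop:quadratic-better-than-koszul} together with the module structure of $R^*$ over $R$.

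\textbf{Lower bound.} Here $A=B=R^*$, $C=R$, so $A^*=B^*=R$ and $C^*=R^*$. For $\alpha\in A^*=R$, the map $T(\alpha)\colon C^*=R^*\to B=R^*$ is the dual action $\gamma\mapsto \alpha\cdot\gamma$, where $(\alpha\cdot\gamma)(x)=\gamma(\alpha x)$. For $\alpha=1$ this is the identity, so $T$ is $1_A$-generic. Proposition~\ref{prop:quadratic-better-than-koszul} then gives
\[
\im\Phi_{\mathrm Q}^C(T)=\sum_{\alpha\in R}(T(\alpha)\ot\id_A)\bigl(\im\koszul^{C,B}(T)\bigr)\supseteq \im\koszul^{C,B}(T).
\]

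\textbf{Upper bound: invariance of the Koszul image.} It remains to show that $\im\koszul^{C,B}(T)\subset R^*\ot R^*$ is stable under $\alpha\cdot$ acting on the first factor. Then every summand lies in the $\alpha=1$ summand and the ranks coincide. On generators,
\[
\koszul^{C,B}(T)\bigl((\gamma\wedge\gamma')\ot\beta\bigr)=\gamma\ot \beta\gamma'-\gamma'\ot\beta\gamma,
\]
since $T(\gamma',\beta)$ is the functional $x\mapsto\gamma'(\beta x)$. Applying $\alpha$ on the first factor gives $\alpha\gamma\ot\beta\gamma'-\alpha\gamma'\ot\beta\gamma$. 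I will check directly that this equals
\[
\koszul\bigl((\alpha\gamma\wedge\gamma')\ot\beta\bigr)+\koszul\bigl((\gamma\wedge\alpha\gamma')\ot\beta\bigr)-\koszul\bigl((\gamma\wedge\gamma')\ot\alpha\beta\bigr).
\]
The only input is that the action is a module action of a commutative ring, so $\alpha(\beta\gamma)=\beta(\alpha\gamma)=(\alpha\beta)\gamma$. This proves the first equality in \eqref{eq:koszul-quaddratic-algebra}.

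\textbf{The $A$-version.} First I will establish the analogue of Proposition~\ref{prop:quadratic-better-than-koszul} with the roles of the factors permuted, by the same factorization argument: $\Phi_{\mathrm Q}^A(T)$ is the composition of $\id\ot\koszul^{A,C}(T)$ with a classical flattening $T^{\bullet}\ot\id$ of $T$. Consequently $\im\Phi_{\mathrm Q}^A(T)$ is a sum of images of $\im\koszul^{A,C}(T)$ under maps $T(\beta)\ot\id$ with $\beta\in B^*=R$, and $T(1)$ is again the identity. The lower bound $\rk\Phi_{\mathrm Q}^A(T)\ge\rk\koszul^{A,C}(T)$ follows as before. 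The upper bound again reduces to stability of $\im\koszul^{A,C}(T)$ under multiplication operators on the relevant factor, which I verify by the same three-term identity using associativity and commutativity of $R$. Finally, Lemma~\ref{lem:easy rank Koszul for algebras} gives $\rk\koszul^{A,C}(T)=n(n-1)$.

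\textbf{Main obstacle.} The only real difficulty is bookkeeping. One must identify precisely which factor the maps $T(\alpha)\ot\id$ act on in each case (for $\Phi_{\mathrm Q}^A$ the acting map lands in a factor of type $R$ rather than $R^*$), and check that the corresponding action on $\im\koszul^{A,C}(T)$ is again a module action. Once this is set up, the three-term identity above carries over verbatim.
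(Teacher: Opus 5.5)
Your proof is correct, but its key step differs from the paper's. The paper proves $\im\Phi_{\mathrm Q}^A(T)=\im\koszul^{A,C}(T)$ directly. The inclusion $\supseteq$ comes from setting $b=1$. For $\subseteq$, it observes that units span $R$, and that for invertible $b$,
\[
\Phi_{\mathrm Q}^A(T)\bigl((a\wedge a')\otimes b\otimes\gamma\bigr)=\koszul^{A,C}(T)\bigl((ba\wedge ba')\otimes\gamma(b^{-1}\cdot\bullet)\bigr).
\]
The $C$-case is only asserted to be similar.

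You instead route through the factorization of Proposition~\ref{prop:quadratic-better-than-koszul}. For the $A$-case this is the permuted version $\Phi_{\mathrm Q}^A(T)=(T^C\otimes\id_B)\circ(\id_{B^*}\otimes\koszul^{A,C}(T))$, where $T(\beta)\colon A^*\to C$ is multiplication by $\beta$; this checks out. Everything then reduces to the Koszul image being stable under the $R$-action on its first factor. Your three-term identity proves this; it holds in both cases and uses only $(\beta a)\gamma=a(\beta\gamma)$ in the $R$-module $R^*$.

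Your identity is exactly the derivative at $b=1$ of the paper's substitution identity, i.e.\ the infinitesimal (Lie-algebra) version of the paper's unit-group trick. What your route buys:
\begin{itemize}
\item no invertibility or density-of-units argument is needed, and the stability holds for every $\alpha$;
\item the conclusion is packaged cleanly as ``$\im\koszul$ is an $R$-submodule, so $\im\Phi_{\mathrm Q}=R\cdot\im\koszul=\im\koszul$'';
\item you treat the $C$-case explicitly, which the paper leaves to the reader.
\end{itemize}
The paper's version is a shorter computation per generator and avoids setting up the permuted factorization. Its cost is the (harmless, in this finite-dimensional setting) fact that units span $R$.
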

\begin{proof}
    Let us prove only the second set of equalities of \eqref{eq:koszul-quaddratic-algebra}, since the proof of the first one is very similar.

    Explicitly, the two flattenings in question map
    \begin{align*}
        \Phi_{\mathrm Q}^A(T): \bigwedge^2 R \otimes R \otimes R^* &\to R \otimes R^*,\\
        (a\wedge a')\otimes b\otimes \gamma &\mapsto (ab)\otimes \gamma(a'\cdot\bullet) - (a'b)\otimes \gamma(a\cdot\bullet)
    \end{align*}
    and
    \begin{align*}
        \koszul^{A,C}(T): \bigwedge^2 R \otimes R^* &\to R \otimes R^*,\\
        (a\wedge a')\otimes\gamma &\mapsto a\otimes \gamma(a'\cdot\bullet) - a'\otimes \gamma(a\cdot\bullet),
    \end{align*}
    where we use $\gamma(a\cdot\bullet)$ to denote the linear form $(x\mapsto \gamma(ax))\in R^*$. We claim that in fact $\Phi_{\mathrm Q}^A(T)$ and $\koszul^{A,C}(T)$ have the same images. The inclusion $\im\Phi^{A,C}(T)\subseteq \im\Phi_{\mathrm Q}^A$ is obvious by considering $b=1$ in the latter map.

    For the opposite inclusion, note that invertible elements form a dense set in $R$, so in particular, $R$ may be spanned by them, hence $\im \Phi_{\mathrm Q}^A$ is spanned by images of $(a\wedge a')\otimes b\otimes\gamma$ with $b$ invertible. These lie in $\im \koszul^{A,C}$ by the following: we set
    \[
        \tilde a := ba, \qquad \tilde a' := ba', \qquad \tilde\gamma := \gamma(b^{-1}\cdot\bullet),
    \]
    which then results in
    \begin{multline*}
        \koszul^{A,C}(T)\left(
            (\tilde a\wedge \tilde a') \otimes \tilde \gamma
        \right)
        =
        \tilde a\otimes\tilde \gamma(\tilde a'\cdot\bullet) - \tilde a'\otimes\tilde\gamma(\tilde a\cdot\bullet)
        =
        (ab)\otimes\gamma(b^{-1}ba'\cdot\bullet) - (a'b)\otimes \gamma(b^{-1}ba\cdot\bullet)
        =\\=
        (ab)\otimes \gamma(a'\cdot\bullet)-(a'b)\otimes\gamma(a\cdot\bullet)
        =
        \Phi_{\mathrm Q}^A(T)\left(
            (a\wedge a') \otimes b\otimes\gamma
        \right).
    \end{multline*}
    This justifies $\rank \Phi_{\mathrm Q}^A(T) = \rank \koszul^{A,C}(T)$. That this common rank equals $n(n-1)$ then follows from Lemma \ref{lem:easy rank Koszul for algebras}.
\end{proof}

\section{Applications}
\label{sec:applications}

\subsection{Matrix multiplication}
\label{subsec:mamu}

It is well-known that the border rank of the $2\times 2$ matrix multiplication tensor $M_2$ equals $7$ \cite{Landsberg2006, Hauenstein-Ikenmeyer-Landsberg}. However, only recently the ``first hand-checkable algebraic proof'' appeared \cite{Conner_Harper_Landsberg_2023}. Below we show that tangency flattenings provide an easy proof.
\begin{proposition}\label{prop:M2=7}
    The border rank of $M_2$ equals seven.
\end{proposition}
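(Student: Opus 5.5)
The upper bound $\underline{R}(M_2)\leq 7$ is classical: Strassen's algorithm gives a rank $7$ decomposition of $M_2$. So the content of the proposition is the lower bound $\underline{R}(M_2)\geq 7$. We take $A=\CC^2\otimes\CC^2$ (matrices $X$), $B$ (matrices $Y$), $C$ (matrices $Z$), all of dimension $n=4$, with $M_2=\sum_{i,j,k} x_{ij}\otimes y_{jk}\otimes z_{ki}$. We apply Corollary~\ref{cor:tan_flat} with $n=4$ and $q=6$. If $M_2$ had border rank at most $6$, then the tangency flattening
\[
\tang^{A}(M_2)\colon \bigwedge^3 A^*\otimes B^*\otimes C^*\to A^*\otimes B\otimes C
\]
would have rank at most $q(q-1)(n-2)=6\cdot5\cdot2=60$. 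The domain has dimension $4\cdot4\cdot4=64$ and the codomain has dimension $64$. It therefore suffices to show that $\tang^A(M_2)$ has rank at least $61$; in fact we aim to show rank $64$ or close to it.

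To compute the map explicitly, unwind the definition. With $k=2$ and $\lambda_{1,1}=\{1,2\}$, the tensor $\tilde T=T\otimes T\otimes \id_{A}$ is contracted in the $A$-factors to $\bigwedge^3 A$, while $B$ and $C$ each keep two copies. The flattening sends $(\alpha_1\wedge\alpha_2\wedge\alpha_3)\otimes\beta\otimes\gamma$ to an alternating sum over which $\alpha_s$ remains in the $A^*$-slot; the other two $\alpha$'s are fed into the two copies of $T$, one contracted with $\beta$ (landing in $C$) and one with $\gamma$ (landing in $B$). Concretely, the image is
\[
\sum_{\sigma}\sgn(\sigma)\,\alpha_{\sigma(1)}\otimes T(\alpha_{\sigma(2)},\gamma)\otimes T(\alpha_{\sigma(3)},\beta),
\]
with the usual normalization. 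For $M_2$ we have $T(x_{ij}^*,\cdot)$ given by matrix products, so each entry is explicit. We would exploit the $\GL_2^{\times3}$ symmetry together with the torus weights of $M_2$ to split the $64\times64$ matrix into blocks indexed by weights, and compute the rank of each block by hand. The weight decomposition should reduce the matrix to many small blocks, most of them $1\times1$ or $2\times2$ monomial-like pieces, whose ranks are immediate.

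The main obstacle is this bookkeeping: showing that the deficiency of $\tang^A(M_2)$ is at most $3$. If full rank fails, the first thing to try is to identify the kernel as a small $\GL_2^{\times3}$-submodule, presumably coming from the identity-matrix directions or the trace, and to bound its dimension via representation theory. Combining $\rk\tang^A(M_2)\geq 61>60$ with Corollary~\ref{cor:tan_flat} gives $\underline{R}(M_2)\geq 7$, and together with Strassen's algorithm this proves the proposition.
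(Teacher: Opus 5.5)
\textbf{Verdict: genuine gap.} Your reduction is the paper's own: Strassen's decomposition gives the upper bound, and Corollary~\ref{cor:tan_flat} with $n=4$, $q=6$ reduces the lower bound to showing $\rk\tang^A(M_2)>60$. But the proposal stops exactly where the proof has to start. You never compute or bound the rank of the $64\times64$ matrix. You only predict that the weight decomposition ``should'' produce small monomial blocks, and you describe a fallback in case full rank fails. That rank computation is the entire mathematical content of the lower bound, so as written there is no proof that the border rank is at least $7$.

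The prediction is also inaccurate, and the slack you are counting on does not exist.
\begin{itemize}
\item For the maximal torus of $\GL_2^{\times3}$, the weight spaces of $\bigwedge^3A^*\otimes B^*\otimes C^*$ have dimension up to $8$, not $1$ or $2$.
\item In the paper's coordinates, consider the $48$ basis vectors $(a\wedge a'\wedge a'')\otimes(i_4,j_4)\otimes(i_5,j_5)$ whose wedge contains $(\neg j_5,\neg i_4)$. Each has a single surviving term, and they map to $48$ distinct basis vectors.
\item The remaining $16$ vectors are $\bigl((j_5,i_4)\wedge(\neg j_5,i_4)\wedge(j_5,\neg i_4)\bigr)\otimes(i_4,j_4)\otimes(i_5,j_5)$. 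They split according to $(j_4,i_5)$ into four copies of the same non-monomial $4\times4$ block $I-P_1-P_2$. Here $P_1,P_2$ flip the two coordinates of $\{0,1\}^2$, and the block has determinant $-3$.
\end{itemize}
So the rank equals $48+4r$, where $r$ is the rank of one block. Since $48+4\cdot3=60$, a drop of even one in the block would make the method fail outright. Your target ``deficiency at most $3$'' is therefore equivalent to full rank $64$. Your kernel-identification fallback could never rescue the argument. What is needed is exactly the verification that this $4\times4$ block is nonsingular, and that is the step the paper carries out.

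If you want to keep a symmetry-based route, see the remark after the paper's proof. $A^*\otimes B\otimes C$ is a multiplicity-free sum of $8$ irreducible $\GL_2^{\times3}$-modules, so by equivariance it suffices to exhibit $8$ suitable vectors in the image. That check still has to be done explicitly.
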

\begin{proof}
Strassen \cite{strassen1969gaussian} provided an explicit rank seven decomposition of $M_2$. We focus on lower-bounding the border rank.

    By Corollary \ref{cor:tan_flat} for $n=4$, $q=6$ we have to show that the tangency flattening has rank greater than $60$. We will prove that it is $64$. We fix a basis $(i,j)$ of $\CC^4$, where $i,j\in\{0,1\}$ and we use $\neg i$ to denote the negation of $i$. Recall that $M_2=\sum_{i,j,k\in \{0,1\}} (i,j)\otimes (j,k)\otimes (k,i)$.

    We will prove that the tangency flattening is a block diagonal matrix with five blocks: a $48\times 48$ identity matrix and four isomorphic, nondegenerate $4\times 4$ matrices.

The tangency flattening of $M_2$ is the restriction to $\bigwedge^3\CC^4\otimes \CC^4\otimes \CC^4$ of
\begin{align}
\label{eq:mamu-tangency}
(\CC^4)^{\otimes 3}\otimes \CC^4\otimes \CC^4 &\to \CC^4\otimes \CC^4\otimes \CC^4,\\
\nonumber
(i_1,j_1)\otimes (i_2,j_2)\otimes (i_3,j_3)\otimes (i_4,j_4)\otimes (i_5,j_5) &\mapsto
\begin{cases}
(i_1,j_1)\otimes (i_2,j_4)\otimes (i_5,j_3), & \text{if $j_2=i_4$ and $i_3 = j_5$,}\\
0, & \text{otherwise.}
\end{cases}
\end{align}
We divide the basis vectors in the domain into five groups.

The first group is formed by vectors of the form $\bigl((\neg j_5,\neg i_4)\wedge(i_2,i_4)\wedge (j_5,j_3)\bigr)\otimes (i_4,j_4)\otimes (i_5,j_5)$. By first choosing $i_4,j_4,i_5,j_5$ (16 options), and then choosing exterior products that contain $(\neg j_5,\neg i_4)$ (3 options), we see there are $48$ such (nonzero) vectors.
Note that this already forces the exterior product to be of the form above, since the two remaining exterior factors are chosen from $(\neg j_5, i_4)$, $(j_5, \neg i_4)$ and $(j_5,i_4)$.
Although the exterior product expands as a signed sum of six tensor products, only one of those -- the one ordered as in the presentation above -- maps to a nonzero vector in \eqref{eq:mamu-tangency}. Thus, the image of each such basis vector is $(\neg j_5,\neg i_4)\otimes (i_2,j_4)\otimes (i_5,j_3)$. Note that such images must satisfy $(\neg j_5,\neg i_4)\neq (\neg i_2,\neg j_3)$.

The next four groups are indexed by $j_4,i_5\in \{0,1\}$. For fixed $j_4,i_5$ the group consists of the four basis vectors $\bigl((j_5,i_4)\wedge (\neg j_5,i_4)\wedge (j_5,\neg i_4)\bigr)\otimes (i_4,j_4)\otimes (i_5,j_5)$, where $i_4, j_5$ are arbitrary.
The image of each such vector is:
\[(j_5,i_4)\otimes (\neg j_5,j_4)\otimes (i_5,\neg i_4)-(\neg j_5,i_4)\otimes (j_5,j_4)\otimes (i_5,\neg i_4)-(j_5,\neg i_4)\otimes (\neg j_5,j_4)\otimes (i_5,i_4).\]
In particular, the images are linear combinations of basis vectors that are distinct from images of the first group as well as other groups -- in each term, the first tensor factor consist exactly of  negations of the first coordinate of the second factor  and the second coordinate of the third factor. Meanwhile, the group is determined by reading off $j_4$ and $i_5$ from the second coordinate of the second factor and first coordinate of the third factor, respectively.

Thus indeed the tangency flattening is in block-diagonal form. It remains for us to prove that the isomorphic $4\times 4$ blocks are of full rank.
Each such matrix has the following form, where we label $(j_5,i_4)\otimes (\neg j_5,j_4)\otimes (i_5,\neg i_4)$ with the first factor $(j_5,i_4)$ and subsequently order rows and columns by $(j_5,i_4)=(0,0)$, $(0,1)$, $(1,0)$, $(1,1)$:
\[\begin{pmatrix}
    1 &  -1&-1&0\\
    -1& 1&0&-1\\
    -1& 0&1&-1\\
    0& -1&-1&1\\
\end{pmatrix}.\]
The determinant of such a matrix is $-3$, thus indeed the matrix is of full rank, and so is the tangency flattening.
\end{proof}
\begin{remark}
    After we shared the first version of the preprint, J. M. Landsberg observed that the proof of Proposition \ref{prop:M2=7} may be even further simplified using representation theory. Indeed, recall that in the coordinate-free way, $M_2\in (V_1\ot V_2^*)\ot (V_2\ot V_3^*)\ot (V_1^*\ot V_3)$ is a $\GL(V_1)\times \GL(V_2)\times \GL(V_3)$ invariant tensor. Thus the codomain of the tangency flattening is a sum of $8$ irreducible representations. Hence, it is enough to check that explicit $8$ vectors are in the image. By analyzing the discrete part of the stabilizer of $M_2$ and its interaction with identity, one can exploit symmetry to reduce the number of vectors even more. Although for $M_2$ each proof is very simple, we believe that this remark may be helpful when working with larger matrix multiplication tensors. 
\end{remark}

\subsection{Distinguishing cactus and secant varieties}
\label{subsec:beat-cactus}

Before we proceed further, we prove how the tangency flattenings $\tangency$ behave under direct sum.
\begin{proposition}
    \label{prop:tangency-direct-sum}
    For $i=1,2$ let $T_i\in A_i\otimes B_i\otimes C_i$, where $\dim A_i=\dim B_i=\dim C_i$, and
    \[
        A:= A_1\oplus A_2, \qquad B:= B_1\oplus B_2, \qquad C:= C_1\oplus C_2, \qquad T:= T_1\oplus T_2\in A\otimes B\otimes C.
    \]
    Then
    \begin{align}
        \label{eq:tangency-direct-sum}
        \rank \tang^A(T) &= \rank \tang^A(T_1) + \rank \tang^A(T_2) + \dim A_2\cdot\rank\Phi_{\mathrm Q}^A(T_1) + \dim A_1\cdot\rank \Phi_{\mathrm Q}^A(T_2) +{}\\&\nonumber\quad{}+ \rk T_2^B\cdot\rk \koszul^{A,B}(T_1) + \rk T_2^C\cdot\rk\koszul^{A,C}(T_1) + \rk T_1^B\cdot\rk \koszul^{A,B}(T_2) + \rk T_1^C\cdot\koszul^{A,C}(T_2),
    \end{align}
    where $\Phi_{\mathrm Q}$ denotes the quadratic flattening of Subsection~\ref{subsec:koszul-and-quadratic} and $T_i^B: A_i^*\otimes C_i^*\to B_i$, $T_i^C: A_i^*\otimes B_i^*\to C_i$ are the classical flattenings.
\end{proposition}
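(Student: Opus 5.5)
The plan is to decompose both the domain and the codomain of $\tang^A(T)$ according to the direct sum splittings. I will show that the matrix becomes block diagonal with eight nonzero blocks, one for each summand of the codomain, and that each block is (up to reordering of factors) a Kronecker product of known flattenings of $T_1$ and $T_2$. First I write the tangency flattening explicitly. Unwinding the construction with $\lambda_{1,1}=\{1,2\}$, $d'_{1,1}=1$, it sends
\[
(\alpha_1\wedge\alpha_2\wedge\alpha_3)\otimes\beta\otimes\gamma\;\mapsto\;\sum_{\sigma\in S_3}\sgn(\sigma)\,\alpha_{\sigma(1)}\otimes T(\alpha_{\sigma(2)},\beta)\otimes T(\alpha_{\sigma(3)},\gamma)\in A^*\otimes B\otimes C .
\]
The key observation is that $T(\alpha,\beta)=0$ whenever $\alpha\in A_i^*$ and $\beta\in B_j^*$ with $i\neq j$, and in that case $T(\alpha,\beta)=T_i(\alpha,\beta)$ when $i=j$. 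The same holds with $C$ in place of $B$.

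Next I split the domain as
\[
\Bigl(\textstyle\bigwedge^3A_1^*\oplus\bigwedge^2A_1^*\otimes A_2^*\oplus A_1^*\otimes\bigwedge^2A_2^*\oplus\bigwedge^3A_2^*\Bigr)\otimes(B_1^*\oplus B_2^*)\otimes(C_1^*\oplus C_2^*)
\]
and go through the sixteen pieces. Using the vanishing observation, I expect the following.
\begin{itemize}
\item $\bigwedge^3A_1^*\otimes B_1^*\otimes C_1^*$ maps into $A_1^*\otimes B_1\otimes C_1$ via $\tang^A(T_1)$. The pieces $\bigwedge^3A_1^*$ paired with $B_2^*$ or $C_2^*$ vanish.
\item $\bigwedge^2A_1^*\otimes A_2^*\otimes B_1^*\otimes C_1^*$: the $A_2^*$ vector can only sit in the first factor. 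The map becomes $\id_{A_2^*}\otimes\Phi_{\mathrm Q}^A(T_1)$ into $A_2^*\otimes B_1\otimes C_1$, of rank $\dim A_2\cdot\rk\Phi_{\mathrm Q}^A(T_1)$.
\item $\bigwedge^2A_1^*\otimes A_2^*\otimes B_2^*\otimes C_1^*$: the $A_2^*$ vector must pair with $\beta$. The map becomes $T_2^B\otimes\koszul(T_1)$, with the $\bigwedge^2A_1^*\otimes C_1^*\to A_1^*\otimes C_1$ Koszul flattening of $T_1$, landing in $A_1^*\otimes B_2\otimes C_1$.
\item Symmetrically, $\bigwedge^2A_1^*\otimes A_2^*\otimes B_1^*\otimes C_2^*$ gives $T_2^C$ tensored with the Koszul flattening of $T_1$ involving $B_1$, landing in $A_1^*\otimes B_1\otimes C_2$.
\item $\bigwedge^2A_1^*\otimes A_2^*\otimes B_2^*\otimes C_2^*$ vanishes, since it would need two $A_2^*$ vectors.
\end{itemize}
The pieces with the roles of the indices $1$ and $2$ exchanged behave the same way. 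Since the rank of a Kronecker product is the product of the ranks, each block has the rank appearing in \eqref{eq:tangency-direct-sum}. The Koszul terms should be matched to the paper's naming convention $\koszul^{A,B}$ versus $\koszul^{A,C}$, i.e.\ according to which factor is wedged and which is the free argument.

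Finally, I check that the eight nonzero blocks land in the eight pairwise distinct summands $A_i^*\otimes B_j\otimes C_k$ of the codomain:
\begin{itemize}
\item $A_1^*B_1C_1$ and $A_2^*B_2C_2$ receive the two tangency blocks;
\item $A_2^*B_1C_1$ and $A_1^*B_2C_2$ receive the two quadratic blocks;
\item $A_1^*B_2C_1$, $A_1^*B_1C_2$, $A_2^*B_1C_2$ and $A_2^*B_2C_1$ receive the four Koszul-type blocks.
\end{itemize}
The blocks also come from disjoint domain summands, so the total rank is the sum of the block ranks.

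The main obstacle is the careful bookkeeping of signs and of which of the six permutation terms survive in each mixed piece. In particular, I must make sure that the surviving terms really reproduce $\Phi_{\mathrm Q}^A$ and the Koszul maps exactly, with consistent signs, rather than some twisted variant. Getting this wrong could break the identification and hence the rank formula for that block.
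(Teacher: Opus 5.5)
Your block decomposition is the paper's argument with different bookkeeping. The paper writes $\tang^A(T)$ as a trilinear expression evaluated at $(T_1+T_2,T_1+T_2,\id_1+\id_2)$ and indexes the surviving pieces by the codomain summand $A_i^*\otimes B_j\otimes C_k$, showing that $(i,j,k)$ forces the domain summand. You instead run through the sixteen domain summands. Both produce the same eight Kronecker-product blocks with pairwise distinct sources and targets. Your sign worry is harmless: in each mixed piece exactly two permutations survive, and they reproduce the antisymmetrization defining $\Phi_{\mathrm Q}^A$ or $\koszul$. So each block is $\pm$ a Kronecker product, and a global sign does not affect rank.

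There is, however, a labeling slip exactly where you deferred the matching, and it matters because the four Koszul terms in the statement are defined by that matching. Since $T(\alpha,\beta)\in C$ and $T(\alpha,\gamma)\in B$, your displayed formula actually lands in $A^*\otimes C\otimes B$ (it agrees with the paper's map up to reordering and an overall sign). This swapped $B$ and $C$ in your Koszul bullets.

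The correct blocks are as follows.
\begin{itemize}
\item On $\bigwedge^2A_1^*\otimes A_2^*\otimes B_2^*\otimes C_1^*$ the $A_2^*$-vector is contracted with $\beta$. The block is, up to sign and reordering, $T_2^C\otimes\koszul^{A,C}(T_1)$, with $T_2^C\colon A_2^*\otimes B_2^*\to C_2$ and $\koszul^{A,C}(T_1)\colon\bigwedge^2A_1^*\otimes C_1^*\to A_1^*\otimes B_1$. It lands in $A_1^*\otimes B_1\otimes C_2$.
\item On $\bigwedge^2A_1^*\otimes A_2^*\otimes B_1^*\otimes C_2^*$ the block is $\pm T_2^B\otimes\koszul^{A,B}(T_1)$, landing in $A_1^*\otimes B_2\otimes C_1$.
\end{itemize}

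As written, your first Koszul bullet pairs $T_2^B$ with a Koszul map defined on $\bigwedge^2A_1^*\otimes C_1^*$, and gives it the wrong target $A_1^*\otimes C_1$. Taken at face value, this yields $\rk T_2^B\cdot\rk\koszul^{A,C}(T_1)$ instead of the statement's term, and in general these differ. The correct rule is that the classical flattening and the Koszul flattening in each mixed block carry the same superscript, namely the factor whose dual argument lies in the index-$1$ summand.

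With this fixed, your list of eight targets is still correct as a set. They remain pairwise distinct, and summing the block ranks gives the stated formula.
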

\begin{proof}
    Via the canonical inclusions stemming from direct sums, we may view both $T_i$ as (nonconcise) tensors in $A\otimes B\otimes C$. Consider the identity matrices $\id_i\in A_i^*\otimes A_i \subset A^*\otimes A$. Then we may view $\tang^A(T)$ as the evaluation $\Phi(T_1+T_2, T_1+T_2, \id_1+\id_2)$ of the trilinear map
    \[
        \Phi: (A\otimes B\otimes C) \times (A\otimes B\otimes C) \times (A^*\otimes A) \to \Hom\left(
            \bigwedge^3 A^*\otimes B^* \otimes C^*,\ A^* \otimes B\otimes C
        \right)
    \]
    given by
    \[
        \Phi(F_1,F_2,F_3)\Bigl((\alpha^{(1)}\wedge\alpha^{(2)}\wedge\alpha^{(3)})\otimes\beta\otimes\gamma\Bigr) := \sum_{\sigma\in S_3}\sgn(\sigma) F_3(\alpha^{(\sigma(1))})\otimes F_1(\alpha^{(\sigma(2))}, \gamma) \otimes F_2(\alpha^{(\sigma(3))}, \beta).
    \]

    With
    \[
    \bigwedge^3 A^* = \bigwedge^3 A_1^* \oplus \left( \bigwedge^2 A_1^* \otimes A_2^*\right) \oplus \left( A_1^*\otimes\bigwedge^2 A_2^*\right) \oplus\bigwedge^3 A_2^*,
    \]
    we may rewrite the codomain of $\Phi$ as
    \[
        \bigoplus_{u,v,w,i,j,k}
        \Hom\left(
        \left(\bigwedge^u A_1^*\otimes \bigwedge^{3-u} A_2^*\right)
        \otimes B_v^* \otimes C_w^*
        ,\
        A_i^* \otimes B_j\otimes C_k
        \right).
    \]
    A priori, this would have $128$ summands, however, the projection of $\Phi(T_1+T_2, T_1+T_2, \id_1+\id_2)$ to most will be identically zero. Namely, notice that once $j\in\{1,2\}$ is chosen, we must choose $w=j$ in order to obtain a nonzero contribution to the image via $T_j^B: A_j^*\otimes C_j^*\to B_j$. Similarly we force $v=k$. Finally, for the $A^*$ factors, we must have the equality of multisets
    \[
        \{i,j,k\} = \{ \underbrace{1}_{\text{$u$-times}}, \underbrace{2}_{\text{$(3-u)$-times}} \},
    \]
    so altogether, a choice of $i,j,k\in\{1,2\}$ uniquely determines $u$, $v$, $w$. These eight contributions to $\Phi(T_1+T_2,\allowbreak T_1+T_2, \id_1+\id_2)$ will have images in independent spaces, so we merely need to sum the ranks of the eight projections. Let us list them in turn:
    \begin{itemize}
        \item For $(i,j,k)=(1,1,1)$, we recover the tangency flattening $\tangency^A(T_1)$. Analogously for $(i,j,k)=(2,2,2)$.
        \item For $(i,j,k)=(1,2,2)$, we obtain the Kronecker product of maps $\id_1:A_1^*\to A_1^*$ and $\bigwedge^2A_2^*\otimes B_2^*\otimes C_2^*\to B_2\otimes C_2$, which is just the quadratic flattening $\Phi_{\mathrm Q}^A(T_2)$. Analogously for $(i,j,k)=(2,1,1)$.
        \item For $(i,j,k)=(1,1,2)$, we obtain the Kronecker product of $\bigwedge^2 A_1^*\otimes C_1^*\to A_1^*\otimes B_1$, which is the Koszul flattening $\koszul^{A,C}(T_1)$, and $A_2^*\otimes B_2^*\to C_2$, which is just the classical flattening $T_2^C$. Analogously for $(i,j,k)=(1,2,1)$, $(2,2,1)$ and $(2,1,2)$.
    \end{itemize}
    Summing these ranks then establishes \eqref{eq:tangency-direct-sum}.
\end{proof}

\begin{corollary}
    \label{cor:alg-tangency-sum}
    If $T_i\in R_i^*\otimes R_i^*\otimes R_i$, $i=1,2$ are structure tensors of $n_i$-dimensional commutative algebras $R_i$, then the structure tensor $T_1\oplus T_2$ of the $n:=(n_1+n_2)$-dimensional algebra $R_1\oplus R_2$ satisfies
    \[
        \rank\tang^A(T_1\oplus T_2) - n(n-1)(n-2) = \Bigl(\rank \tang^A(T_1) - n_1(n_1-1)(n_1-2)\Bigr) + \Bigl(\rank\tang^A(T_2) - n_2(n_2-1)(n_2-2)\Bigr).
    \]
\end{corollary}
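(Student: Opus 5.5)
We will specialize Proposition~\ref{prop:tangency-direct-sum} to structure tensors of algebras. Write $A_i=R_i^*$, $B_i=R_i^*$, $C_i=R_i$, so $\dim A_i=\dim B_i=\dim C_i=n_i$. The right-hand side of \eqref{eq:tangency-direct-sum} then has, besides $\rank\tang^A(T_1)+\rank\tang^A(T_2)$, six cross terms. We will evaluate each of them explicitly in terms of $n_1,n_2$. Then we will check the resulting polynomial identity
\[
n(n-1)(n-2)=n_1(n_1-1)(n_1-2)+n_2(n_2-1)(n_2-2)+(\text{cross terms}).
\]

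The quadratic flattening terms come from Proposition~\ref{prop:koszul-equals-quad-on-algs}, which gives $\rank\Phi_{\mathrm Q}^A(T_i)=n_i(n_i-1)$. The Koszul terms $\rk\koszul^{A,B}(T_i)$ and $\rk\koszul^{A,C}(T_i)$ both equal $n_i(n_i-1)$ by Lemma~\ref{lem:easy rank Koszul for algebras}. The classical flattenings $T_i^B:A_i^*\otimes C_i^*\to B_i$ and $T_i^C:A_i^*\otimes B_i^*\to C_i$ are surjective, since $R_i$ is unital: multiplication $R_i\otimes R_i\to R_i$ is onto, and the dual pairing map $R_i\otimes R_i^*\to R_i^*$, $(a,\gamma)\mapsto\gamma(a\cdot\bullet)$, is onto via $a=1$. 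Hence $\rk T_i^B=\rk T_i^C=n_i$.

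Substituting, the cross terms total
\[
n_2\cdot n_1(n_1-1)+n_1\cdot n_2(n_2-1)+2n_2\cdot n_1(n_1-1)+2n_1\cdot n_2(n_2-1)=3n_1n_2(n_1+n_2-2).
\]
The expansion $n(n-1)(n-2)-n_1(n_1-1)(n_1-2)-n_2(n_2-1)(n_2-2)$ equals $3n_1^2n_2+3n_1n_2^2-6n_1n_2$, which is the same quantity. Subtracting gives the corollary.

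The only delicate point is bookkeeping. We must confirm which factor plays the role of $A$ in each flattening, so that the lemma and proposition cited (stated for $\koszul^{A,B},\koszul^{A,C},\Phi_{\mathrm Q}^A$ under the convention $T\in R^*\otimes R^*\otimes R$) apply verbatim. We must also confirm that $T_1\oplus T_2$ is indeed the structure tensor of $R_1\oplus R_2$ under that convention, which is clear.
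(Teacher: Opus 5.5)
Your proposal is correct and follows the same route as the paper. The paper specializes Proposition~\ref{prop:tangency-direct-sum} to algebras, uses Proposition~\ref{prop:koszul-equals-quad-on-algs} (with Lemma~\ref{lem:easy rank Koszul for algebras} behind it) for the quadratic and Koszul ranks, and uses conciseness (your surjectivity argument) for the classical flattenings. This collapses the six cross terms to $3n_1n_2(n_1+n_2-2)$, which matches the expansion of $n(n-1)(n-2)-n_1(n_1-1)(n_1-2)-n_2(n_2-1)(n_2-2)$, exactly as you computed.
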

\begin{proof}
    Along with the conclusions of Proposition~\ref{prop:koszul-equals-quad-on-algs}, observe that structure tensors of algebras are concise. Thus all but the first two terms of the right-hand side of \eqref{eq:tangency-direct-sum} simplify, yielding
    \[
        \rank\tang^A(T_1\oplus T_2) = \rank \tang^A(T_1) + \rank \tang^A(T_2) + 3n_1n_2(n_1+n_2-2).
    \]
    This rearranges to the Corollary.
\end{proof}
The following lemma is well-known to experts. It can be derived e.g.~from the results in \cite{jelisiejew2024concise}. We include the proof for the sake of completeness.
\begin{lemma}\label{lem:GorCactus}
   Let $R$ be a finite Gorenstein algebra and $T\in (R^*)^{\otimes k}\ot R$ its structure tensor corresponding to the map $R^k\rightarrow R$, $(r_1,\dots,r_k)\mapsto r_1\cdots r_k$. The cactus rank of $T$ equals $d=\dim_{\CC} R$.
\end{lemma}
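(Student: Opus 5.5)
We prove the two inequalities $\le d$ and $\ge d$ separately. Throughout we view $T$ as an element of $\P\bigl((R^*)^{\otimes k}\otimes R\bigr)$ and work with the Segre variety $X=\P(R^*)^{\times k}\times\P(R)$. The cactus rank is the minimal degree of a zero-dimensional subscheme $Z\subset X$ with $[T]\in\langle Z\rangle$.

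\emph{Upper bound.} Since $R$ is finite, $\Spec R$ is a zero-dimensional scheme of degree $d$. We embed it into $X$ by sending a point to the tuple $([\text{evaluation}],\dots,[\text{evaluation}],[\text{point}])$. Concretely, consider the morphism $\iota\colon\Spec R\to \P(R^*)^{\times k}\times\P(R)$ given on $R$-points by the element $\mathbf 1\in R\otimes R^*$ appropriately interpreted, namely the diagonal-type map $x\mapsto(\mathrm{ev}_x,\dots,\mathrm{ev}_x,\delta_x)$. Its linear span in $(R^*)^{\otimes k}\otimes R$ should contain the multiplication tensor.

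A cleaner route for this step is the following. If $R$ is smoothable, then $T$ is a limit of structure tensors of $\CC^d$, which are unit tensors of rank $d$; smoothability is not needed, however. We instead use the standard cactus description: $T$ lies in the span of the subscheme $Z$ defined, via the Segre embedding, by the ideal $I\subset \mathrm{Sym}$ generated by the relations coming from $\ker\bigl(\CC[x]^{\otimes}\to R\bigr)$. Equivalently, the linear forms vanishing on $\langle Z\rangle$ correspond to the kernel of the multiplication map $R^{\otimes k}\otimes R^*\to\CC$, $(r_1,\dots,r_k,\phi)\mapsto\phi(r_1\cdots r_k)$ restricted appropriately. We check that $\langle Z\rangle$ has dimension $d$ and contains $T$ by pairing: $T$ annihilates exactly those multilinear forms which vanish after multiplying in $R$. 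Here $Z\cong\Spec R$ embedded diagonally, and its degree is $d$.

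\emph{Lower bound.} Flatten $T$ to $R^*\to(R^*)^{\otimes k}$ by contracting the $R$ factor against $\phi\in R^*$. Take one $R^*$ factor and the $R$ factor, that is, the map $R\to R$ obtained by setting the remaining $k-1$ inputs equal to $1$. This flattening is the identity on $R$ and has rank $d$. Flattening ranks bound cactus rank from below, since any degree-$r$ scheme spans a space whose flattenings all have rank $\le r$ (the cactus barrier results cited earlier, e.g.\ \cite{jarek_cactus, Galazka-bundles}). Hence the cactus rank is at least $d$. Note that this step does not use the Gorenstein property.

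\emph{Role of the Gorenstein hypothesis.} The Gorenstein hypothesis must therefore be used in the upper bound. To place $T$ in the span of a degree-$d$ scheme on the Segre variety, a dual socle generator $f\in R^*$ is needed to identify $R\cong R^*$. With this identification, $T$ becomes the symmetric multiplication tensor $(r_0,\dots,r_k)\mapsto f(r_0\cdots r_k)$. This is the image of $f$ under the Veronese/Segre reembedding of $\Spec R$, so $T\in\langle\Spec R\rangle$ by apolarity: the apolar scheme of $f$ has degree $d$ precisely because $R=D/f^\perp$. The main obstacle is making this embedding precise for the partially dualized tensor. We handle it by applying the isomorphism $R\to R^*$, $r\mapsto f(r\cdot\bullet)$, which is invertible exactly by Gorensteinness, on the last factor, and noting that this isomorphism preserves cactus rank.
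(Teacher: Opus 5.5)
Your lower bound is fine; it is the conciseness argument the paper uses. Your final strategy for the upper bound is also the paper's: use the Gorenstein form $f$ to replace the last factor $R$ by $R^*$, turning $T$ into $\tilde T\colon (r_0,\dots,r_k)\mapsto f(r_0\cdots r_k)$, and show that $\tilde T$ lies in the span of $\Spec R$ embedded diagonally in $\mathbb{P}(R^*)^{\times(k+1)}$. The gap is that this containment, which is the whole content of the lemma, is never established, and the one place where you describe the span is wrong. In the ``cleaner route'' paragraph, the map $R^{\otimes k}\otimes R^*\to\CC$, $r_1\otimes\dots\otimes r_k\otimes\phi\mapsto\phi(r_1\cdots r_k)$, is just evaluation at $T$. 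Its kernel is the hyperplane $T^\perp$, so taking it as the space of linear forms vanishing on $\langle Z\rangle$ assumes the conclusion, and it gives codimension $1$ rather than $d$. That paragraph also never uses the Gorenstein hypothesis, and no such argument can work. For $k=2$, the non-Gorenstein algebras of Proposition~\ref{prop:1de-koszul} with $3\le e\le\lfloor d/2\rfloor+1$ have structure tensors of border cactus rank exceeding $\dim R$. Finally, $x\mapsto(\mathrm{ev}_x,\dots,\mathrm{ev}_x,\delta_x)$ only describes closed points; it does not define a morphism when $R$ is nonreduced.

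Your final paragraph does not close the gap. Calling $\tilde T$ ``the image of $f$ under the reembedding'' points the right way. But ``the apolar scheme of $f$ has degree $d$'' only gives the degree of a scheme, not that $\tilde T$ lies in its span. The apolarity lemma you allude to is about Veronese reembeddings of forms. To use any version of it here, you would still have to check that the ideal of the diagonal $\Spec R$ annihilates $\tilde T$ in multidegree $(1,\dots,1)$, which is exactly the missing computation. Also, writing $R=D/f^\perp$ with a polynomial $f$ presupposes that $R$ is local; the paper first reduces to that case by subadditivity of cactus rank under direct sums.

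The missing fact, which is the heart of the paper's proof, goes as follows. For local $R$, the surjection $S^\bullet(\mathfrak m)\twoheadrightarrow R$ embeds $\Spec R$ into $\mathfrak m^*\cong\{l\in R^*: l(1)=1\}\subset\mathbb{P}(R^*)$. Under the diagonal embedding into $\mathbb{P}(R^*)^{\times(k+1)}$, the linear form $r_0\otimes\dots\otimes r_k\in R^{\otimes(k+1)}$ restricts to $r_0\cdots r_k\in R$. Hence the linear forms vanishing on the image are exactly $\ker\bigl(m\colon R^{\otimes(k+1)}\to R\bigr)$, and the span is $\mathbb{P}\bigl(m^*(R^*)\bigr)$. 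Since $\tilde T=m^*(f)$, every $l\in\ker m$ satisfies $l(\tilde T)=f(m(l))=0$; this is precisely the paper's coordinate computation. With this inserted and the ``cleaner route'' discarded, your argument becomes the paper's proof.
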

\begin{proof}
   As $T$ is concise it is enough to prove that it has cactus rank at most $d$. Thus we have to exhibit a scheme of length $d$, which will be $\Spec R$, inside the Segre variety, so that $T$ is in its linear
span.  As cactus rank is subadditive under direct sum we may assume $(R,\mathfrak m)$ is local.

We have a canonical surjection of the symmetric algebra over $m$ to $R$:
    \[S^\bullet(\mathfrak m)\twoheadrightarrow R,\text{ and hence }(S^\bullet(\mathfrak m))^{\otimes (k+1)}\twoheadrightarrow R.\]
It induces an inclusion $\Spec R\hookrightarrow \prod_{i=1}^{k+1} \mathfrak m^*$. There is a canonical linear function on $R^*$ given by $1\in R$. It allows us to identify $\mathfrak m^*$ with the affine piece
\[
\{l\in R^* \mid l(1)=1\} = \{[l]\in\P(R^*)\mid l(1)\neq 0\} \subset \P(R^*).
\]
We thus have a natural inclusion
\[\Spec R\hookrightarrow \prod_{i=1}^{k+1}\P(R^*)\subset \P\left((R^*)^{\otimes(k+1)}\right),\]
where the second inclusion is the Segre embedding.

A homogeneous linear form on $\P\left((R^*)^{\otimes(k+1)}\right)$ is an element of $\left(\P\left((R^*)^{\otimes(k+1)}\right)\right)^*= R^{\otimes(k+1)}$. Such a linear form $l=\sum r_1\otimes\dots\otimes r_{k+1}$ vanishes on the image of $\Spec R$ if and only if $\sum r_1\cdots r_{k+1}=0$.

So far we did not use the Gorenstein assumption. Recall that one of the equivalent definitions is the existence of a linear form $l_0\in R^*$, such that the pairing $R\times R\rightarrow\CC$, $(r_1,r_2)\mapsto l_0(r_1r_2)$ is perfect.
We use this pairing to induce the isomorphism $\P\left((R^*)^{\otimes k}\otimes R\right)\simeq\P\left((R^*)^{\otimes k}\right)$ given by $[l_1\otimes\dots\otimes  l_k\otimes r]\mapsto [l_1\otimes\dots\otimes l_k\otimes l(r\cdot \bullet)]$.

To finish the proof, we will show that $T\in \P\left((R^*)^{\otimes k}\otimes R\right)$ is in the projective span of the image of $\Spec R$. Set $r_1:=1$ and choose a basis $r_2,\dots,r_d$ of $\mathfrak m$. To present $T$, we introduce the constants $\lambda_{i_1,\dots,i_k,j}$ defined via
\begin{equation}
\label{eq:tensor-entries}
r_{i_1}\cdots r_{i_k}=:\sum_j \lambda_{i_1,\dots,i_k,j}r_j.
\end{equation}
Thus
\[T=\sum_{i_1,\dots,i_k,j} \lambda_{i_1,\dots,i_k,j}r_{i_1}^*\otimes\dots\otimes r_{i_k}^*\otimes r_j.\]
We now fix a linear form
\[
l=\sum_{i_1,\dots,i_k,i} \mu_{i_1,\dots,i_k,i}r_{i_1}\otimes\dots\otimes r_{i_k}\otimes l_0(r_i\cdot \bullet)
\]
on $\P\left((R^*)^{\otimes k}\otimes R\right)$. The vanishing of $l$ on the image of $\Spec R$ is equivalent to
\[
\sum_{i_1,\dots,i_k,i} \mu_{i_1,\dots,i_k,i}r_{i_1}\cdots r_{i_k}r_i=0.
\]
After substituting \eqref{eq:tensor-entries} we obtain
\[\sum_{i_1,\dots,i_k,i,j}\mu_{i_1,\dots,i_k,i}\lambda_{i_1,\dots,i_k,j}r_j r_i=0.\]
Applying $l_0$ then yields
\begin{equation}\label{eq:l_0}
    \sum_{i_1,\dots,i_k,i,j}\mu_{i_1,\dots,i_k,i}\lambda_{i_1,\dots,i_k,j}l_0(r_j r_i)=0.
\end{equation}
Our aim is to prove that $l(T)=0$, as then we may conclude that $T$ belongs to the linear span of the image of $\Spec R$. We have:
\begin{multline*}
    l(T)=\sum_{i_1,\dots,i_k,j} \lambda_{i_1,\dots,i_k,j}l(r_{i_1}^*\otimes\dots\otimes r_{i_k}^*\otimes r_j)=\\=\sum_{i_1,\dots,i_k,j} \lambda_{i_1,\dots,i_k,j}\sum_{i_1',\dots,i_k',i}\mu_{i_1',\dots,i_k',i}r_{i_1}^*(r_{i_1'})\otimes\dots\otimes r_{i_k}^*(r_{i_k'})\otimes l_0(r_jr_i)
    =\\=\sum_{i_1,\dots,i_k,j,i} \lambda_{i_1,\dots,i_k,j}\mu_{i_1,\dots,i_k,i}l_0(r_jr_i).
\end{multline*}
This is indeed zero by \eqref{eq:l_0}.
\end{proof}

\begin{theorem}
    \label{thrm:beat-cactus}
    The $(n(n-1)(n-2)+1)$-minors of the tangency flattenings for border rank $n$ in $\CC^n\otimes \CC^n\otimes \CC^n$ do not vanish on the $n$-th cactus variety for all $n\geq 14$.
\end{theorem}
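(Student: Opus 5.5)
The plan is to exhibit, for each $n\geq 14$, a concrete tensor of cactus rank $n$ whose tangency flattening has rank strictly greater than $n(n-1)(n-2)$. The natural candidates are structure tensors of Gorenstein algebras: by Lemma~\ref{lem:GorCactus} (with $k=2$), the structure tensor $T\in R^*\otimes R^*\otimes R$ of an $n$-dimensional Gorenstein algebra $R$ has cactus rank exactly $n$. After the identification $R^*\simeq\CC^n$, $R\simeq\CC^n$, such $T$ is therefore a point of the $n$-th cactus variety. Since nonvanishing of some $(n(n-1)(n-2)+1)$-minor is equivalent to $\rank\tang^A(T)>n(n-1)(n-2)$, it suffices to produce one such algebra in each dimension $n\geq14$ for which this strict inequality holds.

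First, I would reduce to a single base case using Corollary~\ref{cor:alg-tangency-sum}. Define the \emph{excess} of an $m$-dimensional commutative algebra $R$ as $\rank\tang^A(T_R)-m(m-1)(m-2)$. The corollary says the excess is additive under direct sums. Moreover, the one-dimensional algebra $\CC$ has excess $0$: here $\bigwedge^3(\CC^1)^*=0$, so both sides vanish. More generally, $\CC^m$ has minimal border rank, so its excess is $\leq 0$ by Corollary~\ref{cor:tan_flat}, and additivity together with the direct sum $\CC^m=\CC\oplus\dots\oplus\CC$ gives excess exactly $0$.

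Consequently, if $R_{14}$ is a $14$-dimensional Gorenstein algebra with positive excess, then $R_{14}\oplus\CC^{\,n-14}$ is an $n$-dimensional algebra with the same positive excess. It is still Gorenstein, since a finite product of Gorenstein algebras is Gorenstein; alternatively, one can invoke subadditivity of cactus rank under direct sums, as in the proof of Lemma~\ref{lem:GorCactus}. Hence it has cactus rank $n$ and tangency flattening rank above $n(n-1)(n-2)$, which proves the theorem for all $n\geq 14$.

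The base case is the crux. The natural choice is a local Gorenstein algebra of dimension $14$ with Hilbert function $(1,6,6,1)$, for instance the apolar algebra of a general cubic in $6$ variables. These are exactly the known nonsmoothable Gorenstein schemes of degree $14$ in $\CC^6$ mentioned in Section~\ref{sec:preliminaries}. Nonsmoothability is necessary for positive excess, since a smoothable algebra has minimal border rank. It is also consistent with Conjecture~\ref{conj:tangency}, which predicts that the excess relates to how far the tangent space to the Hilbert scheme exceeds $6\cdot 14=84$.

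I would verify $\rank\tang^A(T)\geq 14\cdot13\cdot12+1=2185$ by an explicit computation. Choose a specific cubic $f$, preferably sparse or highly symmetric (e.g.\ of the form $x_1x_2x_3+x_4x_5x_6+\dots$ or some Fermat-type variant), so that its apolar algebra is Gorenstein with Hilbert function $(1,6,6,1)$. Write the structure tensor in a monomial basis, and compute the rank of the $\binom{14}{3}\cdot 196\times 14^3$ matrix exactly over $\mathbb Q$, or modulo a prime; a rank lower bound computed modulo a prime lifts to characteristic zero. If no clean hand argument is available, I would exploit the grading: the flattening decomposes into graded pieces by total degree, which reduces it to smaller block matrices that can each be checked independently.

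The main obstacle is this base-case rank computation: finding a Gorenstein algebra for which the excess is provably positive, and certifying it. If the $(1,6,6,1)$ choice failed, I would try other nonsmoothable Gorenstein algebras with large tangent spaces.
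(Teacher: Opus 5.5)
Your proposal is correct and matches the paper's proof. Gorenstein structure tensors have cactus rank $n$ by Lemma~\ref{lem:GorCactus}, the base case $n=14$ is a computer-certified rank computation for the apolar algebra of a general cubic in six variables (the paper obtains $\rank\tang^A(T)=2192=14\cdot13\cdot12+8$), and passing to $R\oplus\CC$ preserves the positive excess via Corollary~\ref{cor:alg-tangency-sum} and $\bigwedge^3\CC^1=0$. Two small corrections: by Conjecture~\ref{conj:tangency}, positive excess corresponds to a tangent space \emph{smaller} than $84$, not larger, and a very special cubic such as $x_1x_2x_3+x_4x_5x_6$ is not guaranteed to give a nonsmoothable algebra, so a randomly chosen cubic, as in the paper, is the safer certificate.
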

\begin{proof}
By Lemma \ref{lem:GorCactus}, structure tensors of Gorenstein algebras have minimal cactus rank.
    Thus, it suffices to produce examples of structure tensors of $n$-dimensional Gorenstein algebras whose tangency flattenings have ranks strictly higher than $n(n-1)(n-2)$.

    For $n=14$, we use the apolar algebra to a general cubic in six variables, where we verify directly with computer assistance that its structure tensor has tangency flattening of $\rank \tang^A(T) = 2192 = 14\cdot13\cdot12+8$.

    Afterward, we proceed by induction: if $R$ is an $n$-dimensional Gorenstein algebra whose structure tensor $T$ has a tangency flattening of rank higher than $n(n-1)(n-2)$, we consider $R\oplus\CC$. The structure tensor of $\CC$ is the $1\times1\times1$ identity tensor $I_1$, whose tangency flattening is trivial due to $\dim \bigwedge^3\CC^1 = 0 = 1\cdot0\cdot(-1)$. Hence Corollary~\ref{cor:alg-tangency-sum} yields
    \begin{align*}
        \rank\tang^A(T\oplus I_1) - (n+1)n(n-1) &= \rank \tang^A(T) - n(n-1)(n-2) > 0.
        \qedhere
    \end{align*}
\end{proof}

\subsection{Minor flattenings and algebras}\label{subsec:minors-and-algs}

In this subsection we show that the minor flattening $\Phi_{\mathrm M,4}^C$ may prove nonsmoothability of local Gorenstein algebras $R$ even when the algebra belongs to a larger component of the Hilbert scheme than the smoothable component.
\begin{example}\label{ex:k-minor flattening}
    Letting $R_6$ and $R_8$ be the apolar algebras to a general cubic in $6$ and $8$ variables respectively, let $T_6$ and $T_8$ be their structure tensors. With computer assistance, we obtain\footnote{
        The ranks here are obtained with the method described in Subsection~\ref{subsec:kronecker-young}, hence formally, they are lower bounds unconditionally and equal the true ranks with probability $1$.
    }
    \begin{align*}
        \rk\Phi_{\mathrm M,3}^C(T_6) &= 372 > 364=\binom{14}3, &
        \rk\Phi_{\mathrm M,4}^C(T_8) &= 3123 > 3060=\binom{18}4,
    \end{align*}
    which proves that these algebras are nonsmoothable. Note that the algebra $R_8$ belongs to a component of the Hilbert scheme of dimension $155 > 144 = 8\cdot18$.
\end{example}

\section{Future directions}

\subsection{Hilbert schemes}

The following conjecture gives tangency flattenings their name:

\begin{conjecture}
    \label{conj:tangency}
    Let $\CC[x_1,\dots,x_d]\surjto R$ be a surjection to an $n$-dimensional algebra $R$, let $[R]$ be the corresponding point of $\Hilb_n\CC^d$ and let $T$ be the structure tensor of $R$. Then
    \[
        \rank\tang^C(T) - n(n-1)(n-2) = dn - \dim\T_{[R]}\Hilb_n\CC^d.
    \]
\end{conjecture}
Informally: the tangency flattening of the structure tensor of $R$ wins (or loses) over the rank threshold $n(n-1)(n-2)$ by exactly as much as how smaller (or larger) the tangent space at $[R]$ to $\Hilb_n\CC^d$ is compared to the smoothable component.

In view of the conjecture, our proof of Theorem~\ref{thrm:beat-cactus} may be seen as merely recovering the known fact that the apolar algebra to a general cubic in six variables has too small of a tangent space to the Hilbert scheme.
The algebras covered by Proposition~\ref{prop:1de-koszul} and Example~\ref{ex:1463} include both algebras with small tangent spaces and large tangent spaces (cf. the dimension counting of \cite{iarrobino-inventiones}, which even lower-bounds the dimension of the component of $\Hilb_n\CC^d$). Hence already Koszul flattenings are capable of proving nonsmoothability of algebras with large tangent spaces.

By investigating flattening behavior under direct sums, we can construct other algebras where a Kronecker-Koszul flattening succeeds despite a small rank of the tangency flattening:

\begin{example}[higher tangency flattening]
    Let $R_1$ be the apolar algebra to $3$ general quadrics in $4$ variables and $R_2$ the algebra corresponding to the smoothable scheme obtained by scaling a general smooth scheme $R_2'\subset \CC^6$ of degree $11$ by $t\in\CC^*$ and taking the limit at $t=0$. The dimensions of these algebras are $n_1=8$ and $n_2=11$ respectively, we then take $R:= R_1\oplus R_2$.

    Let $T_1$, $T_2$, $T=T_1\oplus T_2$ be the respective structure tensors. It may be directly computed that
    \[
        \rk\tangency^C(T_1) = 8\cdot7\cdot6 + 7 \qquad\text{and}\qquad \rk\tangency^C(T_2) = 11\cdot10\cdot9 - 9,
    \]
    hence by Corollary~\ref{cor:alg-tangency-sum}, $\rk\tangency^C(T) = 19\cdot18\cdot17 - 2$, meaning the tangency flattening does not prove nonsmoothability of $R$.

    Let us modify the tangency flattening by using an identity map on $\bigwedge^2 C^*$ instead of just $C^*$. We might call the following map
    \[
        \Phi_{\mathrm T,2}^C(T): \bigwedge^4 C^*\otimes A^*\otimes B^* \to \bigwedge^2 C^*\otimes A\otimes B
    \]
    the ``second tangency flattening''. Analogously to Proposition~\ref{prop:tangency-direct-sum} and Corollary~\ref{cor:alg-tangency-sum}, its behavior under direct sums may be analyzed to prove that on structure tensors of algebras, it satisfies a formula incorporating both the second and usual tangency flattenings:
    \begin{align*}
        &{}\rk\Phi_{\mathrm T,2}^C(T) - n(n-1)\binom{n-2}2 ={}\\{}&\quad{}=
        \left(\rk\Phi_{\mathrm T,2}^C(T_1) - n_1(n_1-1)\binom{n_1-2}2\right)
        +
        \left(\rk\Phi_{\mathrm T,2}^C(T_2) - n_2(n_2-1)\binom{n_2-2}2\right)+{}\\
        &\qquad{}+ 2n_2\cdot\left(\rk\Phi_{\mathrm T}^C(T_1) - n_1(n-1-1)(n_1-2)\right)
        + 2n_1\cdot\left(\rk\Phi_{\mathrm T}^C(T_2) - n_2(n-2-1)(n_2-2)\right).
    \end{align*}
    Evaluating explicitly $\rk\Phi_{\mathrm T,2}^C(T_1) = 8\cdot 7\cdot\binom62 + 28$ and $\rk\Phi_{\mathrm T,2}^C(T_2) = 11\cdot 10\cdot\binom92 - 36$ then yields
    \[
    \rk\Phi_{\mathrm T,2}^C(T) - 19\cdot18\cdot\binom{17}2 = 28 - 36 + 2\cdot 11\cdot 7 - 2\cdot 8\cdot 9 = 2 > 0,
    \]
    hence $\Phi_{\mathrm T,2}^C$ proves nonsmoothability of $R$ despite $\tangency^C$ not being able to.
\end{example}

Similarly to Conjecture \ref{conj:tangency} we may investigate Example \ref{ex:k-minor flattening}. We notice that $\rk\Phi_{\mathrm M,3}^C(T_6)$ exceeds $\binom{14}{3}$ by $372-364=8$. This is also the difference of the dimensions of tangent spaces to the Hilbert scheme respectively at a smooth scheme (of degree six) and at the algebra corresponding to $T_6$.
\begin{conjecture}
    Let $\CC[x_1,\dots,x_d]\surjto R$ be a surjection to an $n$-dimensional Gorenstein algebra $R$, let $[R]$ be the corresponding point of $\Hilb_n\CC^d$ and let $T$ be the structure tensor of $R$. Then
    \[
        \rank\Phi_{\mathrm M,3}^C(T) - \binom{n}{3} = dn - \dim\T_{[R]}\Hilb_n\CC^d.
    \]
\end{conjecture}

\subsection{Relations to the Bodensee program}
In \cite{dinu2025applications, conner2021characteristic, michalek2023enumerative} the Bodensee program was initiated. It suggests the study of tensors via geometry of the associated subvarieties of Grassmannians and varieties of complete collineations. The program is directly related to the study of matroids via subvarieties of the permutohedral variety \cite{JunePhD}.
In this direction, collineation varieties of tensors were introduced and studied in \cite{gesmundo2025collineation}. We note that the $k$-minor flattening $\Phi_{\mathrm M,k}^C$, whose rank may distinguish the cactus variety from the secant variety, has image equal to the linear span of the associated collineation variety. Further, \cite[Proposition 3.3]{jagiella2025characteristic} and \cite[Theorem 1.1]{bik2020jordan} imply that for structure tensors $T$ of $n$-dimensional local Gorenstein algebras we have $\rk \Phi_{\mathrm M,k}^C(T)=\rk\Phi_{\mathrm M,n-k}^C(T)$.

The dimension of the linear span of a variety is one of the most basic invariants one can imagine. The possible generalizations to other values of the Hilbert function are a very natural direction. It is also natural to ask which other algebraic, cohomological or geometric invariants of generalized Schubert varieties or collineation varieties could give control over the border rank of a tensor.

\subsection{Special tensors and equations of secant varieties}
Many special tensors and their border rank are of particular interest. Among them are for example matrix multiplication, monomials \cite{landsberg2010ranks, han2022new}, determinant tensor \cite{ju2025new, Conner_Harper_Landsberg_2023} or matching tensors \cite{bjorklund2024asymptotic, bjorklund2025fast}. We did not extensively check all our equations on these examples, noting that for $M_2$ our equations certainly provide a new approach. It is a natural next step to further investigate lower bonds provided by Kronecker-Young flattenings.

Another line of research studies equations of secant varieties from qualitative \cite{draisma2014bounded, sam2017ideals} or explicit \cite{raicu2012secant} perspectives. It is natural to ask about the extent to which Kronecker-Young flattenings describe secant varieties, as they are generalizations of Koszul flattenings.

\subsection{Strengthening the method}

In closing, let us outline several open-ended suggestions as to how the method of Kronecker-Koszul flattenings might potentially be strengthened.

\begin{question}[more complicated contractions]
    What if, in the construction of Section~\ref{sec:construction}, we contract several $\bigwedge^{d'_{i,j}} V_i^*$ together? Or what if we additionally contract several $\bigwedge^{d'_{i,j}+|\lambda_{i,j}|}V_i$?
\end{question}

\begin{question}[more inputs]
    Our methods create a large tensor from an input tensor $T$ and identity maps on some vectors spaces. The resulting rank criteria then essentially stem from the observation that as $T$ deforms to another tensor $T'$ via an action of the underlying endomorphism rings, the rank of the Kronecker-Koszul flattening can only decrease.

    The same would apply if we considered a version of Kronecker-Koszul flattenings with multiple input tensors -- when deforming any single one of the inputs via endomorphisms of the underlying spaces, the rank could only drop. Could stronger results on a tensor $T$ be extracted by choosing suitable ``witness tensors'' dependent on $T$ as the additional inputs?
\end{question}

\begin{question}[flattening the Kronecker-Koszul tensor nonclassically]
    In the construction of Section~\ref{sec:construction}, we create a Kronecker-Koszul tensor and then take a classical flattening of it. What if we instead finished with taking some nonclassical flattening, e.g. by bracketing the tensor as a $3$-tensor in some way and taking a Koszul flattening?
\end{question}
Of course, any Koszul flattening of the Kronecker-Koszul tensor would reduce to a direct sum of several Kronecker-Young flattenings via plethysms. One might hope however that viewing the construction through repeated exterior powers might be more tractable than handling a complicated representation-theoretic expression directly.

\bibliography{biblio}
\bibliographystyle{alpha_four}
\end{document}